\newcommand{\esp}{\vspace{.2cm}}
\newcommand{\reff}[1]{(\ref{#1})}
\theoremstyle{plain}
\newtheorem{theo}{Theorem}[section]
\newtheorem{cor}[theo]{Corollary}
\newtheorem{prop}[theo]{Proposition}
\newtheorem{lem}[theo]{Lemma}
\theoremstyle{remark}
\newtheorem{rem}[theo]{Remark}
\newcommand{\ca}{{\mathcal A}}
\newcommand{\cf}{{\mathcal F}}
\newcommand{\cg}{{\mathcal G}}
\newcommand{\cl}{{\mathcal L}}
\newcommand{\cs}{{\mathcal S}}
\newcommand{\cu}{{\mathcal U}}
\newcommand{\A}{{\mathbb A}}
\newcommand{\E}{{\mathbb E}}
\newcommand{\N}{{\mathbb N}}
\renewcommand{\P}{{\mathbb P}}
\newcommand{\R}{{\mathbb R}}
\newcommand{\T}{{\mathbb T}}
\newcommand{\Z}{{\mathbb Z}}
\newcommand{\bt}{{\mathbf t}}
\newcommand{\bs}{{\mathbf s}}
\newcommand{\ind}{{\bf 1}}
\newcommand{\Card}{{\rm Card}\;}
\newcommand{\dist}{{\rm dist}\;}
\newcommand{\inv}[1]{\mathop{\frac{1}{ #1}}\nolimits}
\newcommand{\expp}[1]{\mathop {\mathrm{e}^{ #1}}}
\title[Conditioned Galton-Watson trees]{Local limits of conditioned Galton-Watson trees I: the infinite spine case}
\date{\today}
\author{Romain Abraham} 
\address{
Romain Abraham,
Laboratoire MAPMO, CNRS, UMR 7349,
F\'ed\'eration Denis Poisson, FR 2964,
 Université d'Orléans,
B.P. 6759,
45067 Orléans cedex 2,
France.
}
\email{romain.abraham@univ-orleans.fr}
\author{Jean-François Delmas}
\address{
Jean-Fran\c cois Delmas,
Université Paris-Est, \'Ecole des Ponts, CERMICS, 6-8
av. Blaise Pascal, 
  Champs-sur-Marne, 77455 Marne La Vallée, France.}
\email{delmas@cermics.enpc.fr}
\begin{document}

\begin{abstract}
  We give  a necessary and  sufficient condition for the  convergence in
  distribution   of  a  conditioned   Galton-Watson  tree   to  Kesten's
  tree.  This yields  elementary proofs  of Kesten's  result as  well as
  other  known  results on  local  limits  of conditioned  Galton-Watson
  trees.  We  then apply  this  condition to  get  new  results in  the
  critical  case (with  a general  offspring  distribution)  and  in  the
  sub-critical cases (with a generic offspring distribution)  on the limit
  in distribution of a Galton-Watson  tree conditioned on having a large
  number of individuals with out-degree in a given set.
\end{abstract}

\keywords{Galton-Watson, random tree, local-limit, non-extinction, 
branching process}

\subjclass[2010]{60J80, 60B10}

\maketitle

\section{Introduction}

Galton-Watson  (GW)   processes  constitute  a  very   simple  model  of
population growth where all individuals give birth independently of each
others  to  a  random  number   of  children  with  the  same  offspring
distribution  $p$.   This  population  growth  can  be  described  by  a
genealogical tree $\tau$ that we call the GW tree. It is well-known that
in  the sub-critical  case  (the mean  number  of children  of a  single
individual is strictly less than 1) and in the (non-degenerate) critical
case (the mean number of children  of an individual is 1) the population
becomes a.s.  extinct.  However,  one can define  in these two  cases a
tree $\tau^*$ with an infinite spine, that we call Kesten's tree in this
paper,  which can  be seen  as the  tree conditioned  on non-extinction,
defined  as  the  local  limit   in  distribution  of  the  tree  $\tau$
conditioned to  reach height $n$, when  $n$ tends to  infinity, see {\sc
  Kesten}  \cite{k:sbrwrc}.  This result  is  recalled  here in  Section
\ref{sec:kesten-main}. The  tree $\tau^*$ happens to  be the size-biased
tree  already studied  earlier, see  e.g.  {\sc  Hawkes} \cite{h:tgsbp},
{\sc Joffe}  and {\sc Waugh}  \cite{jw:edkngwp} as well as  {\sc Lyons},
{\sc Pemantle} and {\sc Peres} \cite{lpp:cpLlogLcmbbp}.  It also appears
(for GW processes  only) as a Q-process  and can be viewed as  a GW tree
with immigration, see {\sc Athreya} and {\sc Ney} \cite{an:bp}.
We want to stress that we only consider here local limits i.e. we look
at the trees up to a fixed height $h$.
Other limits can be
considered such as scaling limits of conditioned GW trees
(see \cite{d:ltcpcgwt,k:ltcnggwt,r:slmbtgwtcnvodgs}) but this is not
the purpose here.

\esp It  is  also   known  that,  at  least  in   the  critical  case,  other
conditionings  such  as conditioning  by  the  total  progeny, see  {\sc
  Kennedy}   \cite{k:gwctp}  and  {\sc  Geiger}   and  {\sc  Kaufmann}
\cite{gk:slgwtiv},  or by the  number of  leaves, see {\sc Curien} and {\sc Kortchemski}
\cite{ck:rncpccgwta},
lead to the same local limit  in distribution. See also the survey from {\sc
  Janson} \cite{j:sgtcgwrac}.  

\esp For all those cases, the conditioning event can be written as
 $\{\tau\in \A_n\}$ with $\A_n$ of the form:
\[
\A_n=\{\bt,\ A(\bt)\ge n\}\quad\mbox{or}\quad \A_n=\{\bt,\ A(\bt)=n\},
\]
where $A:  \bt \mapsto  A(\bt)$ is  a functional defined  on the  set of
trees    and   satisfying   an    additive   property,    see   Equation
\reff{eq:A=A+B}.  The  main  result of this paper,  see  Theorem
\ref{theo:KGen} for a precise  statement, unifies all the previous
conditionings and  gives a necessary and sufficient  condition to obtain
Kesten's tree as  a limit.  In the non-degenerate  critical case, if $A$
 satisfies  the additive  property \reff{eq:A=A+B}
, then the following two
statements are equivalent (with some additional aperiodic condition for the converse):
\begin{itemize}
   \item $\lim_{n\rightarrow+\infty } \P(\tau\in \A_{n+1})/\P(\tau\in
  \A_{n})=1$,
   \item The distribution of $\tau$ conditionally on $\{\tau\in \A_n\}$
     converges to the distribution of Kesten's tree $\tau^*$. 
\end{itemize}

Using this result, we give elementary proofs for the convergence in
distribution to Kesten's tree $\tau^*$ of the GW tree conditioned on:
\begin{itemize}
\item[(i)]  Extinction after or at a large  time (sub-critical  and critical
 case), with $A(\bt)=H(\bt)$ the height of the tree $\bt$ and
 conditioning event $\{H(\tau)=n\}$ or $\{H(\tau)\geq n\}$. See Sections
 \ref{ex:H>n} and  \ref{ex:H=n}. 
\item[(ii)]   Large  total   population  size   (critical   case),  with
  $A(\bt)=\Card(\bt)$ the total size  of the tree and conditioning event
  $\{\Card(\tau)=n\}$   or   $\{\Card(\tau)\geq   n\}$.   See   Section
  \ref{sec:totI}.
\item[(iii)]  Large  number  of  leaves (critical  case),  with  $A(\bt)
  =L_0(\bt)$  the   total  number  of  leaves  of   $\bt$ and
 conditioning event $\{L_0(\tau)=n\}$ or $\{L_0(\tau)\geq n\}$.  See  Section
  \ref{sec:L0}.
\end{itemize}

\esp Let  us   mention  that  assertion  (i)  with   the  conditioning  event
$\{H(\tau)\geq n\}$ was first  proved by {\sc Kesten} \cite{k:sbrwrc} in
the   critical  case  under   a  finite   variance  condition,   and  in
\cite{j:sgtcgwrac}, Theorem  7.1, in  full generality. Property  (ii) is
also proved  in full generality in \cite{j:sgtcgwrac},  Theorem 7.1 (the
sub-critical  case  is  also  studied  in  \cite{j:sgtcgwrac},  see  the
discussion below). Finally, assertion  (iii) with the conditioning event
$\{L_0(\tau)=n\}$ has been proved  by {\sc Curien} and {\sc Kortchemski}
\cite{ck:rncpccgwta}, Theorem  4.1, in the critical  and finite variance
case only.

\esp In fact  the conditioning on the  large total population size  or on the
large number  of leaves  are particular cases  of conditioning  trees on
large  number of  individuals with  a  given number  of children.   This
corresponds to the functional  $A(t)=L_{\ca}(\bt)$ which gives the total
number of individuals of the tree $\bt$ whose number of children belongs
to a  given set  $\ca$ of nonnegative  integers.  Such  conditioning has
already been studied by {\sc Rizzolo} \cite{r:slmbtgwtcnvodgs}, see also
{\sc  Mimami \cite{m:nvgdgwt}}, but  for global  scaling limits  and not
local  limits.  We obtain  the convergence  in distribution  to Kesten's
tree  $\tau^*$ of  a  critical  GW tree  without  any additional  moment
condition on the offspring distribution, conditioned on:
\begin{itemize}
\item[(iv)] Large number of individuals with number of children in a given
  set $\ca$ (critical case),  with $A(\bt) =L_\ca(\bt)$ and conditioning
  event  $\{L_\ca(\tau)=n\}$  or  $\{L_\ca(\tau)\geq  n\}$. 
\end{itemize}
Here, we use the fact that $L_\ca(\tau)$ is distributed according to the
total progeny of another critical GW tree, which allows to use (ii), see
\cite{m:nvgdgwt,r:slmbtgwtcnvodgs}.   Let  us   remark  that  the  total
progeny $(\ca=\N)$, the number of leaves $(\ca=\{0\})$ and the number of
internal  nodes $(\ca=\N\setminus\{0\})$  are particular  cases  of this
conditioning.

\esp The main ingredients in the proof for (ii), (iii) and (iv) are Dwass
formula for the representation of the total progeny of a GW
tree using random walks, and the strong ration theorem for these
random walks which has some links with the local sub-exponential
property  of the total progeny of GW trees, see \cite{afk:asrvlsb}.

\esp We then  study the  subcritical case and  define a  one-parameter family
$(p_\theta,\theta\in I)$  of distributions on  the set of  integers such
that the GW  tree $\tau$ associated with the  offspring distribution $p$
and the GW tree $\tau_\theta$ associated with the offspring distribution
$p_\theta$ have  the same  conditional distributions given  $L_\ca$, see
Proposition  \ref{prop:sub}.  This generalizes  Kennedy's transformation
\cite{k:gwctp}  concerning the total  progeny, and  the pruning  of {\sc
  Abraham}, {\sc Delmas} and {\sc He} \cite{adh:pgwttvmp} concerning the
number of leaves.   According to \cite{j:sgtcgwrac}, we say  that $p$ is
generic (with  respect to  $\ca$) if there  exists $\theta_c$  such that
$p_{\theta_c}$ is  critical.  We then immediately  deduce, see Corollary
\ref{cor:K-sub}, that if $p$ is generic, then the distribution of $\tau$
conditionally  on  $\{L_\ca(\tau)=n\}$ (in  the  aperiodic  case) or  on
$\{L_\ca(\tau)\geq n\}$  converges to  the distribution of  the Kesten's
tree   $\tau^*_{\theta_c}$  associated   with  the   critical  offspring
distribution $p_{\theta_c}$.   When there is no such  $\theta_c$, then a
condensation phenomenon  may appear: {\sc Jonsson}  and {\sc Stefansson}
\cite{js:cnt} or  \cite{j:sgtcgwrac} proved for the  conditioning on the
total progeny that  the limiting tree in that case  is not Kesten's tree
but a tree with a unique  node with an infinite number of offsprings. We
shall investigate  this condensation phenomenon  for other conditionings
in a forthcoming paper \cite{ad:ncgwtcc}. Let us add that an
example is given in \cite{ad:ncgwtcc} of an offspring distribution
which is generic with respect to a set $\ca$ and non-generic with
respect to another set. Hence, it seems difficult to give a sufficient
condition for an offspring distribution to be generic (i.e. to have
existence of the critical value $\theta_c$).

\esp Finally, we  consider another conditioning which does  not enter in
the framework of  Theorem \ref{theo:KGen} : conditioning on  the size on
the  $n$-th generation.   However, we  can  adapt the  proof of  Theorem
\ref{theo:KGen} to get an analogous result in that case, see Proposition
\ref{prop:gen}.  We apply this  result to a critical geometric offspring
distribution where explicit computations  can be performed to prove that
the  corresponding GW tree  conditioned on  the $n$-th  generation being
positive but  smaller that $n^2$  converges in distribution  to Kesten's
tree. Using results on local limit of GW processes from {\sc Nagaev} and
{\sc Vakhtel} \cite{nv:ltpldgwp, nv:lltcwp}, this result can be extended
to very general critical offspring distributions.

\esp The paper  is organized as  follows.  In Section  \ref{sec:tree}, we
recall the  framework we use for  discrete trees and define  the GW tree
$\tau$  and Kesten's tree  $\tau^*$ associated  with offspring distribution
$p$.  In  Section \ref{sec:main}, we  state and prove the  necessary and
sufficient condition for convergence in distribution of the conditioned
tree to Kesten's  tree. We apply this result  in Section \ref{sec:ex} to
recover  the classical  results  on critical  conditioned  GW trees  and
we study  in Section  \ref{sec:LA} the  case of  the  number of
individuals  with  out-degree  in  a  given set  for  the  critical  and
sub-critical case. Finally, we study in Section \ref{sec:geom} the
conditioning on the size of the $n$-th generation of the GW tree.

\section{Technical background on GW trees}
\label{sec:tree}
\subsection{First notations}

We denote by $\N=\{0,1,2,\ldots\}$ the set of non-negative integers
 and by $\N^*=\{1,2,\ldots\}$ the set of positive integers. 

If $K$ is a subset of $\N^*$, we call the span of $K$ the greatest
common divisor of $K$.
If $X$ is an integer-valued random variable, we call the span of $X$
the span of  $\{n>0,\ \P(X=n)>0\}$ the restriction to $\N^*$ of its support.

\subsection{The set of discrete trees}
We recall Neveu's formalism \cite{n:apghw} for ordered rooted trees. We let
\[
\cu=\bigcup _{n\ge 0}{(\N^*)^n}
\]
be the set  of finite  sequences of positive  integers with  the convention
$(\N^*)^0=\{\emptyset\}$.  For  $u\in  \cu$  let  $|u|$  be  the  length  or
generation  of  $u$   defined  as  the  integer  $n$   such  that  $u\in
(\N^*)^n$. If $u$ and $v$ are  two sequences of $\cu$, we denote by $uv$
the concatenation of the two  sequences, with the convention that $uv=u$
if $v=\emptyset$ and  $uv=v$ if $u=\emptyset$.  The set  of ancestors of
$u$ is the set:
\begin{equation}
   \label{eq:Au}
   A_u=\{v\in \cu; \text{there exists $w\in \cu$, $w\neq \emptyset$,  such that $u=vw$}\}.
\end{equation}
The most recent common ancestor of a subset $\bs$ of $ \cu$, denoted by
$\text{M}(\bs)$, is the unique element $u$ of $\bigcap_{u\in \bs} A_u$ with
maximal length $|u|$.

For $u,v\in\cu$, we denote by $u<v$ the lexicographic order on $\cu$
i.e. $u<v$ if $u\in A_v$ or, if we set $w=\text{M}(u,v)$, then $u=wiu'$ and
$v=wjv'$ for some $i,j\in\N^*$ with $i<j$. 

A tree $\bt$ is a subset of $\cu$ that satisfies:
\begin{itemize}
\item $\emptyset\in\bt$,
\item If  $u\in\bt$, then $A_u\subset \bt$. 
\item For every $u\in \bt$, there exists a non-negative integer
  $k_u(\bt)$ such that, for every positive integer $i$,  $ui\in \bt$ iff $1\leq i\leq k_u(\bt)$. 
\end{itemize}

The integer $k_u(\bt)$ represents the number of offspring of the vertex
$u\in \bt$.  The vertex $u\in \bt$  is called a leaf  if $k_u(\bt)=0$. The
vertex $\emptyset$ is called the root of $\bt$.  Let us remark that, for
a tree $\bt$, we have
\begin{equation}\label{eq:sum_k}
\sum_{u\in\bt}k_u=\Card(\bt)-1.
\end{equation}

Let $\bt$ be a tree. The set of its leaves is  $\cl_0(\bt)=\{u\in \bt;
k_u(\bt)=0\}$, its height is defined by 
\[
H(\bt)=\sup\{|u|,\ u\in\bt\}
\]
and can be infinite. For $u\in \bt$, we define the 
sub-tree  $\cs_u(\bt)$ of $\bt$ ``above'' $u$ as:
\[
\cs_u(\bt)=\{v\in\cu,\ uv\in\bt\}.
\]

We   denote  by   $\T$  the   set  of   trees,  by 
$$\T_0=\{\bt\in  \T;\,\Card(\bt)<+\infty \}$$ 
the subset  of finite trees, by 
$$\T^{(h)}=\{\bt\in \T;  H(\bt)\leq h\}$$
 the  subset of trees  with height at  most $h\in\N$,   and  by   
$$\T_1=\{  \bt   \in  \T;   \lim_{n\rightarrow+\infty
 }|\text{M}(\{u\in \bt; |u|=n\})|=+\infty \}$$
  the subset of trees with
a unique infinite spine. Notice that $\T_0$ and $\T^{(h)}$ are countable
and $\T_1$ is uncountable as the set of infinite sequences of positive
integers can be embedded in $\T_1$.  For $h\in \N$ the restriction function $r_h$
from $\T$ to $\T$ is defined by:
\[
r_h(\bt)=\{u\in\bt,\ |u|\le h\}.
\]
We endow the set $\T$ with the ultrametric distance
\[
d(\bt,\bt')=2^{-\max\{h\in\N,\ r_h(\bt)=r_h(\bt')\}}.
\]
A  sequence $(\bt_n, n\in\N)$  of trees
converges to a tree $\bt$ with  respect to the distance $d$ if and only if,
for every $h\in \N$,
\[
r_h(\bt_n)=r_h(\bt)\qquad\mbox{for $n$ large enough}.
\]
The Borel $\sigma$-field associated with  the distance $d$ is the smallest
$\sigma$-field  containing  the singletons  for  which the  restrictions
functions  $(r_h, h\in  \N)$ are  measurable.  With  this  distance, the
restriction functions are contractant. Since $\T_0$ is dense in $\T$ and
$(\T,d)$  is complete,  we get that  $(\T,d)$ is  a  Polish metric
space.

Consider  the closed  ball  $B(\bt,2^{-h})=\{\bt'\in \T;
d(\bt,\bt')\leq 2^{-h}\}$ for some $\bt\in \T$ and $h\in \N$ and notice that:
\[
B(\bt, 2^{-h})=r_h^{-1}(\{r_h(\bt)\}).
\]
Since the  distance is  ultrametric, the closed  balls are open  and the
open balls are closed, and the intersection of two balls is either empty
or one of  them. We deduce that the  family $((r_h^{-1}(\{\bt\}), \bt\in
\T^{(h)}), h\in \N)$ is a  $\pi$-system, and Theorem 2.3 in \cite{b:cpm}
implies that this  family is convergence  determining for the  convergence in
distribution.   Let $(T_n,  n\in \N^*)$  and $T$  be  $\T$-valued random
variables. We denote by $\dist(T)$ the distribution of the random
variable $T$ (which is uniquely determined by the sequence of
distributions of $r_h(T)$ for every $h\ge 0$), and we denote
$$\dist(T_n)\underset{n\to+\infty}{\longrightarrow} \dist(T)$$
for the convergence in distribution of the sequence $(T_n,n\in\N^*)$
to $T$. 
We deduce from  the portmanteau  theorem that  the sequence
$(T_n, n\in  \N^*)$ converge in distribution  to $T$ if and  only if for
all $h\in \N$, $\bt\in \T^{(h)}$:
\[
\lim_{n\to+\infty}\P(r_h(T_n)=\bt)=\P(r_h(T)=\bt).
\]
For $\bt\in  \T$ and $u\not\in \bt$, set  $k_u(\bt)=-1$. The convergence
in distribution of the sequence $(T_n,n\in\N^*)$ to $T$ is also equivalent to
the  finite dimensional  convergences  in distribution  of the  sequence
$((k_{u_1}(T_n),  \ldots, k_{u_m}(T_n)),  n\in  \N^*)$ to  $(k_{u_1}(T),
\ldots, k_{u_m}(T))$ for all $m\in \N^* $ and $u_1, \ldots, u_m\in \cu$.

As we  shall only consider $\T_0$-valued random  variables that converge
in distribution  to a  $\T_1$-valued random variable,  we shall  give an
alternative  characterization of  convergence in  distribution that  holds for
this restriction.  To present this result, we  introduce some notations.
If $\bt,\bs\in\T$ and $x\in\cl_0(\bt)$ we denote by:
\[
\bt\circledast (\bs,x)=\{u\in \bt\}\cup\{xv,v\in\bs\}
\]
the tree obtained by grafting the tree $\bs$ on the leaf $x$ of
the tree $\bt$.
For every $\bt\in\T$ and every $x\in\cl_0(\bt)$, we shall consider the
set of trees obtained by grafting a tree on the leaf $x$ of $\bt$:
\[
\T(\bt,x)=\{\bt\circledast (\bs,x),\ \bs\in \T\}.
\]
It is easy to see that $\T(\bt,x)$ is closed. It is also  open, as for
all $\bs \in \T(\bt,x)$ we have that $B(\bs, 2^{-H(\bt)-1})\subset
\T(\bt, x)$. 

Moreover, notice that the set $\T_1$ is a Borel subset of the set $\T$.

\begin{lem}
   \label{lem:cv-determing}
Let $(T_n,  n\in \N^*)$  and $T$  be  $\T$-valued random
variables which belong a.s. to $\T_0\bigcup \T_1$.
The sequence $(T_n,  n\in \N^*)$ converges in distribution to $T$ if and
only if for every $\bt\in\T_0$ and every $x\in\cl_0(\bt)$, we have:
\begin{equation}
   \label{eq:cv-determing}
\lim_{n\to+\infty}\P(T_n\in \T(\bt,x))=\P(T\in \T(\bt,x))\quad\mbox{and}\quad
\lim_{n\to+\infty}\P(T_n=\bt)=\P(T=\bt).
\end{equation}
\end{lem}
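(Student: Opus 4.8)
The plan is to prove the two implications separately: the direct one is a quick consequence of the topological facts already established, while the converse carries the real content. For the direct implication, suppose $\dist(T_n)\to\dist(T)$. Since $\T(\bt,x)$ is both open and closed it is a continuity set for the law of $T$, so the portmanteau theorem gives $\P(T_n\in\T(\bt,x))\to\P(T\in\T(\bt,x))$. Moreover, for $\bt\in\T_0$ and any $h>H(\bt)$ one has $r_h^{-1}(\{\bt\})=\{\bt\}$, since a tree agreeing with $\bt$ up to a height exceeding $H(\bt)$ has no vertex above height $H(\bt)$ and hence equals $\bt$; thus $\{\bt\}$ is a clopen ball and $\P(T_n=\bt)\to\P(T=\bt)$. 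This gives \reff{eq:cv-determing}.

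For the converse, assume \reff{eq:cv-determing}. I would reduce the problem to a one-sided estimate on balls, namely that for every $h$ and every $\bs\in\T^{(h)}$,
\[
\liminf_{n\to\infty}\P\big(T_n\in B\big)\ \ge\ \P\big(T\in B\big),\qquad B:=r_h^{-1}(\{\bs\}).
\]
Indeed, granting this for all balls, Fatou's lemma extends it to $\liminf_n\P(T_n\in U)\ge\P(T\in U)$ for every open $U$ (an ultrametric open set being a countable disjoint union of balls); applying this both to the clopen ball $B$ and to its open complement $B^c$ yields $\P(T_n\in B)\to\P(T\in B)$, which is the convergence-determining criterion recalled before the statement.

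The key is to approximate $B$ from inside by sets whose probabilities already converge. Let $\cg$ be the family made of $\emptyset$, the singletons $\{\bt\}$ ($\bt\in\T_0$) and the grafting sets $\T(\bt,x)$. A short case analysis on the relative position of the marked leaves shows that $\T(\bt,x)\cap\T(\bt',x')$ is again empty, a singleton, or a set $\T(\bt'',x'')$; hence $\cg$ is a $\pi$-system, all its finite intersections stay in $\cg$, each member is a continuity set on which \reff{eq:cv-determing} gives convergence of probabilities, and by inclusion--exclusion so does every finite union of members of $\cg$. For $m\ge1$ I then set
\[
C_m=\bigcup\Big\{\T(\bt,x):\bt\in\T_0,\ r_h(\bt)=\bs,\ \Card(\bt)\le m,\ x\in\cl_0(\bt),\ |x|\ge h\Big\}\ \cup\ \Big\{\bt\in\T_0:r_h(\bt)=\bs,\ \Card(\bt)\le m\Big\}.
\]
Each $C_m$ is a finite union of members of $\cg$ contained in $B$ (there being finitely many trees with at most $m$ vertices), so $\P(T_n\in C_m)\to\P(T\in C_m)$ and $C_m\subseteq C_{m+1}$. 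The crucial point, where the hypothesis $T\in\T_0\cup\T_1$ a.s.\ enters, is that $\bigcup_m C_m\supseteq B\cap(\T_0\cup\T_1)$: a finite tree of $B$ lies in the singleton part once $m$ is large; and if $\bt'\in\T_1$ has $r_h(\bt')=\bs$, let $v_{h+1}$ be the height-$(h+1)$ vertex of its unique infinite spine and let $\bt=\{u\in\bt':v_{h+1}\notin A_u\}$ be $\bt'$ pruned above $v_{h+1}$. Uniqueness of the spine forces every branch of $\bt$ off the spine to be finite, so $\bt\in\T_0$; one checks $r_h(\bt)=\bs$, $v_{h+1}\in\cl_0(\bt)$ and $\bt'=\bt\circledast(\cs_{v_{h+1}}(\bt'),v_{h+1})\in\T(\bt,v_{h+1})$, whence $\bt'\in C_m$ for $m\ge\Card(\bt)$.

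Since $\dist(T)$ is carried by $\T_0\cup\T_1$ and each $C_m\subseteq B$, this coverage gives $\P(T\in C_m)\uparrow\P(T\in B)$; combined with $\P(T_n\in B)\ge\P(T_n\in C_m)$ it yields $\liminf_n\P(T_n\in B)\ge\P(T\in C_m)$ for every $m$, and letting $m\to\infty$ proves the ball inequality, hence the converse. I expect this inner exhaustion to be the main obstacle: a ball contains trees with arbitrarily large finite parts, whereas a single grafting set $\T(\bt,x)$ frees only one leaf, so an infinite tree of $B$ can be caught by such a set only when all its remaining branches are finite — which is exactly what the single infinite spine guarantees, and is precisely the role played by the assumption $T\in\T_0\cup\T_1$.
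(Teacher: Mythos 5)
Your proof is correct, and it shares its geometric heart with the paper's: both arguments rest on the facts that the grafting sets $\T(\bt,x)$ together with the singletons $\{\bt\}$, $\bt\in\T_0$, form a $\pi$-system of clopen sets, and that an infinite tree of $\T_1$ lying in a ball $r_h^{-1}(\{\bs\})$ can be caught by a set $\T(\bt,v)$ with $\bt$ finite by pruning above the spine vertex $v$ at height $h+1$ -- this is exactly how the paper verifies its condition \reff{eq:thm2.3}. Where you genuinely diverge is in how these facts are converted into convergence in distribution. The paper invokes Theorem 2.3 of \cite{b:cpm} to conclude that such a $\pi$-system is a convergence determining class; you re-derive that implication from scratch: you exhaust each ball from inside by the increasing finite unions $C_m$ of $\pi$-system elements, transfer convergence of probabilities to these unions by inclusion--exclusion (closure of $\cg$ under finite intersection is what makes this legitimate), obtain the lower portmanteau inequality for balls, and upgrade it to equality by decomposing every open set -- in particular the complement of a ball -- into countably many disjoint balls and applying Fatou. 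Your route is longer but self-contained, and it makes explicit where the hypothesis that $T$ lives a.s.\ in $\T_0\cup\T_1$ enters (namely, that $\bigcup_m C_m$ carries all of the mass of $\dist(T)$ restricted to the ball); the paper's route is shorter but hides that mechanism inside the cited theorem. A minor additional merit of your write-up: your case analysis correctly allows $\T(\bt,x)\cap\T(\bt',x')$ to be a singleton even when $\bt\neq\bt'$ (when $\bt\in\T(\bt',x')$ but $x$ is a leaf of $\bt$ not descending from $x'$), a sub-case that the paper's displayed formula glosses over; since singletons belong to the family anyway, this affects neither argument.
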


\begin{proof}
The subclass
$\cf=\{\T(\bt,x),\ \bt\in\T_0,\ x\in\cl_0(\bt)\}\cup\{\{\bt\},\
\bt\in\T_0\}$ of the Borel sets on $T_0\bigcup \T_1$ 
forms a $\pi$-system since we have
\[
\T(\bt_1,x_1)\cap \T(\bt_2,x_2)=\begin{cases}
\T(\bt_1,x_1) & \mbox{if }\bt_1\in \T(\bt_2,x_2),\\
\T(\bt_2,x_2) & \mbox{if }\bt_2\in \T(\bt_1,x_1),\\
\{\bt_1\} & \mbox{if } \bt_1=\bt_2 \mbox{ and }x_1\ne x_2,\\
\emptyset & \mbox{in the other cases}.
\end{cases}
\]

For  every $h\in\N$  and  every $\bt\in\T^{(h)}$,  we  have that  $\bt'$
belongs  to  $r_h^{-1}(\{\bt\}) \bigcap  \T_1$  if  and  only if  $\bt'$
belongs  to some  $\T(\bs,x)$ where  $x$ is  a leaf  of $\bt$  such that
$|x|=h$ and $\bs$ belongs  to $r_h^{-1}(\{\bt\}) \bigcap \T_0$ such that
$x$ is also a leaf of  $\bs$.  Since $\T_0$ is countable, we deduce that
$\cf$   generates  the  Borel   $\sigma$-field  on   $\T_0\cup\T_1$.  In
particular $\cf$ is a separating class on $\T_0\bigcup \T_1$. 


Since $A\in \cf$ is   closed and  open as
well, according to Theorem  2.3  of  \cite{b:cpm}, 
to prove that the family  $\cf$ is a convergence determining class, it is
enough to check  that for all $\bt \in \T_0\bigcup  \T_1$ and $h\in \N$,
there exists $A \in \cf$ such that:
\begin{equation}
   \label{eq:thm2.3}
\bt\in A\subset B(\bt, 2^{-h}).
\end{equation}
If  $\bt\in \T_0$,  this is  clear as  $\{\bt\}=B(\bt, 2^{-h})$  for all
$h>H(\bt)$.   If  $\bt\in  \T_1$,   for  all  $\bs\in  \T_0$  and  $x\in
\cl_0(\bs)$  such that  $\bt \in  \T(\bs, x)$,  we have  $\bt\in \T(\bs,
x)\subset  B(\bt,  2^{-|x|})$.    Since we can find such a $\bs$ and $x$
such that $|x|$ is arbitrary large, we  deduce  that  \reff{eq:thm2.3}  is
satisfied. This
proves that the family  $\cf$ is a convergence determining class  on $\T_0\bigcup \T_1$.

Since, for $\bt\in \T_0$ and $x\in \cl_0(\bt)$ the sets  $\T(\bt,x)$ and
$\{\bt\}$ are open and closed, we deduce from the portmanteau Theorem
that if $(T_n, n\in \N^*)$ converges in distribution to $T$, then
\reff{eq:cv-determing} 
holds for every $\bt\in\T_0$ and every $x\in\cl_0(\bt)$. 
\end{proof}



\subsection{GW trees}

Let $p=(p(n), n\in \N)$ be a  probability distribution on the set of the
non-negative integers. We assume that
\begin{equation}
\label{eq:cond-p}
p(0)>0,\ p(0)+p(1)<1,\ \mbox{and}\ \mu:=\sum_{n=0}^{+\infty}np(n)<+\infty.
\end{equation}

A $\T$-valued random variable $\tau$ is a Galton-Watson (GW) tree
with   offspring distribution     $p$   if    the    distribution   of
$k_\emptyset(\tau)$  is $p$  and  for $n\in  \N^*$, conditionally  on 
$\{k_\emptyset(\tau)=n\}$,                 the                 sub-trees
$(\cs_1(\tau),\cs_2(\tau),\ldots,\cs_n(\tau))$   are   independent   and
distributed as the original tree $\tau$.
Equivalently, for every  $h\in \N^*$ and every $\bt\in
\T^{(h)}$, we have
\[
\P(r_h(\tau)=\bt)=\prod_{u\in r_{h-1}(\bt)}p(k_u(\bt)).
\]
In particular, the restriction of the distribution of $\tau$ on the
set $\T_0$ is given by: 
\begin{equation}\label{eq:loi-tau}
\forall \bt\in \T_0,\quad \P(\tau=\bt)=\prod_{u\in\bt}p(k_u(\bt)).
\end{equation}
The GW tree is called critical (resp. sub-critical, super-critical) if
$\mu=1$ (resp. $\mu<1$, $\mu>1$). 
\subsection{Conditioning on non-extinction}
\label{sec:kesten-main}
Let   $p$   be   an offspring distribution    satisfying   Assumption
\reff{eq:cond-p}  with  $\mu\le  1$  (i.e.  the  associated  GW
process is critical or sub-critical). We denote by $p^*=(p^*(n)=np(n)/\mu,
n\in \N)$ the corresponding size-biased distribution.

We define an infinite random
tree $\tau^*$ (the size-biased tree that we call Kesten's tree in this
paper), whose distribution is as follows. There 
exists a unique infinite sequence  $(V_k, k\in \N^*)$ of positive
  integers such that, for every $h\in \N$, $V_1\cdots V_h\in \tau ^*$, with
  the convention that $V_1\cdots V_h=\emptyset$ if $h=0$.
The joint distribution of $(V_k, k\in \N^*)$ and $\tau^*$ is
  determined recursively as follows: for each $h\in \N$,
  conditionally given $(V_1,\ldots,V_h)$ and $r_h(\tau^*)$, we have:
\begin{itemize}
\item The number of children $(k_v(\tau^*),\ v\in \tau^*,\ |v|=h)$ are
  independent and  distributed according to $p$ if $v\ne
  V_1\cdots V_h$ and according to $p^*$ if $v=V_1\cdots V_h$.
\item  Given  also  the  numbers  of children  $(k_v(\tau^*),\  v\in  \tau^*,\
  |v|=h)$, the integer  $V_{h+1}$ is uniformly distributed on  the set of
  integers $\left\{1,\ldots, k_{V_1\cdots V_h}(\tau^*)\right\}$.
\end{itemize}

Notice that by construction, $\tau^*\in\T_1$ a.s.

Following Kesten \cite{k:sbrwrc}, the random
tree $\tau^*$ can be viewed as the tree $\tau$ conditioned on
non-extinction as:
\[
\forall h\in\N^*,\ \forall
\bt\in\T^{(h)},\ \P(r_h(\tau^*)=\bt)=\lim_{n\to+\infty}\P(r_h(\tau)=\bt\bigm|
H(\tau)\ge n).
\]

As a direct consequence we get that for all $h\in \N$, $\bt\in
\T^{(h)}$, $u\in \bt$ such that $|u|=h$:
\[
\P(r_h(\tau^*)=\bt, V_1\cdots V_h=u)=\mu^{-h} \P(r_h(\tau)=\bt),
\]
and for all $\bt\in \T_0$, $x\in \cl_0(\bt)$:
\begin{equation}
   \label{eq:t*-T}
\P(\tau^*\in \T(\bt,x))=\mu^{-|x|} \P(\tau\in \T(\bt,x)). 
\end{equation}
Since,  for  $\bt\in \T_0$ and  $x\in \cl_0(\bt)$, 
$ \P(\tau=\bt)=\P(\tau\in \T(\bt,x), k_x(\tau)=0) =
\P(\tau\in \T(\bt,x))p(0)$,
we deduce that:
\begin{equation}
   \label{eq:t*-Tp}
\P(\tau^*\in \T(\bt,x))=\inv{\mu^{|x|}p(0)}  \P(\tau=\bt).
\end{equation}
Since $\tau^*$ is in $\T_1$ a.s., this implies that \reff{eq:t*-Tp} with
$\bt\in \T_0$ and  $x\in \cl_0(\bt)$ characterizes the distribution of
$\tau^*$.

\section{Main result}\label{sec:main}

Let $A$ be an integer-valued function defined on $\T$ which is finite on
$\T_0$ and satisfies  the
following additivity property:  there exists an
integer-valued 
function $D$ defined on $\T$ such that, for every $\bt\in\T_0$,
every $x\in\cl_0(\bt)$ and for every $\tilde\bt$ such that
$A(\bt\circledast(\tilde\bt,x)) $ is large enough,
\begin{equation}
   \label{eq:A=A+B}
A(\bt\circledast(\tilde\bt,x))=A( \tilde \bt)+D(\bt,x). 
\end{equation}

Let $n_0\in \N\cup\{+\infty  \}$ be given. We define  for all $n\in \N^*$,
the subset of trees
$$\A_n=\{\bt\in \T; A(\bt)\in [n, n+n_0)\}.$$  
Common values of $n_0$ that will be considered are $1$ and $+\infty$. 

The   following  theorem   states   that  the   distribution  of   the
GW tree $\tau$ conditioned to be in $\A_\infty$, the limit of
$\A_n$, is distributed as $\tau^*$ as soon as the probability of $\A_n$
satisfies some regularity.
We denote by
$$\dist(\tau|\tau\in\A_n)$$
the conditional law of $\tau$ given $\{\tau\in\A_n\}$.

\begin{theo}
   \label{theo:KGen}
   Assume that Assumptions  \reff{eq:cond-p} and \reff{eq:A=A+B} hold,
   that $\P(\tau\in \A_n)>0$ for $n$ large enough and that  one of the two following conditions
\begin{itemize}
\item $\mu=1$ or
\item $\mu<1$ and $D(\bt,x)=|x|$ for all $\bt\in \T_0$, $x\in
  \cl_0(\bt)$. 
\end{itemize}
Then, if 
\begin{equation}
   \label{eq:CondK}
\lim_{n\rightarrow+\infty } \frac{\P(\tau\in \A_{n+1})}{\P(\tau\in
  \A_{n})}=\mu,
\end{equation}
we have:
$$\dist(\tau|\tau\in\A_n)\underset{n\to+\infty}{\longrightarrow} \dist(\tau^*).$$
Conversely,  if   $\dist(\tau|\tau\in\A_n)\underset{n\to+\infty}{\longrightarrow} \dist(\tau^*)$  and  if the span
of $\{D(\bt,x); \bt\in \T_0\text{  and } x\in \cl_0(\bt)\}\bigcap \N^* $
is one, then \reff{eq:CondK} holds.
\end{theo}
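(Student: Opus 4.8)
The plan is to apply Lemma \ref{lem:cv-determing}: since $\tau$ takes values in $\T_0$ a.s.\ and $\tau^*$ in $\T_1$ a.s., it suffices to verify the two limits in \reff{eq:cv-determing} for the conditioned trees, reading $T_n$ as $\tau$ given $\{\tau\in\A_n\}$ and $T=\tau^*$. Throughout I write $a_n=\P(\tau\in\A_n)$, which is positive for $n$ large by hypothesis.

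For the forward implication I would first compute, for fixed $\bt\in\T_0$ and $x\in\cl_0(\bt)$, the joint probability $\P(\tau\in\T(\bt,x),\tau\in\A_n)$. Writing $\tau=\bt\circledast(\tilde\bt,x)$ with $\tilde\bt=\cs_x(\tau)$ and applying \reff{eq:loi-tau} factorizes $\P(\tau=\bt\circledast(\tilde\bt,x))=\P(\tau=\bt)\,\P(\tau=\tilde\bt)/p(0)$, the leaf $x$ contributing a spurious factor $p(0)$ in $\P(\tau=\bt)$ that must be removed. The additivity property \reff{eq:A=A+B} then shows that, for $n$ large enough that every contributing $\tilde\bt$ makes $A(\bt\circledast(\tilde\bt,x))$ exceed the threshold in \reff{eq:A=A+B}, the event $\{\bt\circledast(\tilde\bt,x)\in\A_n\}$ coincides with $\{\tilde\bt\in\A_{n-D(\bt,x)}\}$. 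Summing over $\tilde\bt$ gives
\[
\P(\tau\in\T(\bt,x)\mid\tau\in\A_n)=\frac{\P(\tau=\bt)}{p(0)}\,\frac{a_{n-D(\bt,x)}}{a_n}.
\]
By \reff{eq:CondK} and telescoping, $a_{n-D}/a_n\to\mu^{-D}$ with $D=D(\bt,x)$; under either hypothesis ($\mu=1$, or $\mu<1$ with $D(\bt,x)=|x|$) this limit equals $\mu^{-|x|}$, so the right-hand side tends to $\P(\tau=\bt)\mu^{-|x|}/p(0)=\P(\tau^*\in\T(\bt,x))$ by \reff{eq:t*-Tp}, which is the first limit in \reff{eq:cv-determing}. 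The second limit is immediate: since $A(\bt)$ is finite, $\bt\notin\A_n$ for $n>A(\bt)$, so $\P(\tau=\bt\mid\tau\in\A_n)=0$ eventually, matching $\P(\tau^*=\bt)=0$ because $\tau^*\in\T_1$. Lemma \ref{lem:cv-determing} then yields the convergence.

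For the converse I would run the same identity backwards: the assumed convergence together with Lemma \ref{lem:cv-determing} and \reff{eq:t*-Tp} forces $\lim_n a_{n-D(\bt,x)}/a_n=\mu^{-|x|}$ for every $\bt,x$, hence $\lim_n a_{n-d}/a_n=\mu^{-d}$ for every $d$ in $S:=\{D(\bt,x);\,\bt\in\T_0,\,x\in\cl_0(\bt)\}\cap\N^*$ (again invoking $\mu=1$ or $D=|x|$). Setting $c_n=\log(a_n/\mu^n)$, this reads $c_{n+d}-c_n\to 0$ for each $d\in S$. The crux is then purely arithmetic: the map $d\mapsto\lim_n(c_{n+d}-c_n)$ is additive wherever defined and sends $-d$ to minus its value at $d$, so it vanishes on the whole subgroup of $\Z$ generated by $S$; since the span of $S$ is one this subgroup is $\Z$, whence $c_{n+1}-c_n\to 0$, i.e.\ $a_{n+1}/a_n\to\mu$, which is \reff{eq:CondK}.

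The main obstacle is twofold. In the forward direction the delicate point is the uniform-in-$n$ use of \reff{eq:A=A+B}: one must check that for large $n$ the trees $\tilde\bt$ with small $A(\tilde\bt)$, for which additivity may fail, contribute to neither $\{\tau\in\T(\bt,x),\tau\in\A_n\}$ nor $\A_{n-D(\bt,x)}$, so that the two index sets genuinely coincide. In the converse the essential step is the group-theoretic passage from the increments along the steps $d\in S$ to the unit increment; this is exactly where the span-one hypothesis enters, and without it \reff{eq:CondK} may fail on a proper sublattice.
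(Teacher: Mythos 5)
Your proposal is correct and follows essentially the same route as the paper: the same factorization of $\P(\tau\in\T(\bt,x),\tau\in\A_n)$ via \reff{eq:loi-tau} and \reff{eq:A=A+B}, the same appeal to Lemma \ref{lem:cv-determing} and \reff{eq:t*-Tp}, and the same Bezout argument for the converse (which you merely spell out more explicitly through $c_n=\log(a_n/\mu^n)$). The "delicate point" you flag about the threshold in \reff{eq:A=A+B} is handled in the paper at the same level of detail, by taking $n$ large enough for fixed $(\bt,x)$.
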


Recall that the local convergence in distribution towards $\tau^*$ is
equivalent to 
\begin{equation}
   \label{eq:limt-An}
\forall h\in\N^*,\ \forall
\bt\in\T^{(h)},\ \lim_{n\to+\infty}\P(r_h(\tau)=\bt\bigm|\tau\in \A_n
)=\P(r_h(\tau^*)=\bt).
\end{equation}

\begin{proof}
Let us first remark that, as we supposed that $\mu\le 1$, we have a.s.
$\tau\in\T_0$ and thus we are in the setting of Lemma \ref{lem:cv-determing}.

 Using  \reff{eq:loi-tau}, we have for every $\bt\in \T_0$, $x\in\cl_0(\bt)$ and  $\tilde\bt\in \T_0$:
\[
\P(\tau=\bt\circledast(\tilde\bt,x))=\inv{p(0)} \P(\tau=\bt)\P(\tau=\tilde
\bt).
\]
Let $\bt\in \T_0$ and $x\in \cl_0(\bt)$. Then, if $n$ is large enough so that we can apply Equation
\reff{eq:A=A+B}, we get:  
\begin{align*}
\P(\tau\in \T(\bt,x) , \tau\in \A_n)
&=\sum_{\tilde\bt\in \T_0} \P(\tau=\bt\circledast(\tilde \bt,x))\ind_{\{
  n\leq A(\bt\circledast(\tilde\bt,x)) <n+n_0\}}\\
&=\inv{p(0)}\sum_{\tilde\bt\in \T_0} \P(\tau=\bt)\P(\tau=\tilde \bt)\ind_{\{ n\leq A(\tilde \bt)+ D(\bt,x) <n+n_0\}}\\
&=\inv{p(0)} \P(\tau=\bt) \P(n-D(\bt,x)\leq A(\tau)< n+n_0-D(\bt,x))\\
&=\mu^{|x|} \P(\tau^*\in \T(\bt,x)) \P(\tau\in \A_{n-D(\bt,x)}), 
\end{align*}
where we used  \reff{eq:t*-Tp}
for the last equality. Therefore we have
\begin{equation}
   \label{eq:tt*}
\P(\tau\in \T(\bt,x) \bigm|\tau\in\A_n)
= \P(\tau^*\in \T(\bt,x)) \, \mu^{|x|}\, \frac{\P(\tau\in
  \A_{n-D(\bt,x)})}{
  \P(\tau\in \A_n)}\cdot
\end{equation}
Then, using \reff{eq:CondK} and that $D(\bt,x)=|x|$ if
$\mu<1$, we obtain that:
\begin{equation}
   \label{eq:tt*2}
\lim_{n\rightarrow+\infty } \P(\tau\in \T(\bt,x) \bigm| \tau\in\A_n)
=\P(\tau^*\in \T(\bt,x)).
\end{equation}
For all $\bt\in \T_0$ and all $n>A(\bt)$, we have
$$\P(\tau=\bt,
  \tau\in \A_n)=\P(\tau=\bt,
  \bt\in \A_n)\le \ind_{\{\bt\in\A_n\}}=0$$
and thus:
\begin{equation}
   \label{eq:tt*3}
\lim_{n\rightarrow+\infty } \P(\tau=\bt \bigm| \tau\in\A_n)=0
=\P(\tau^*=\bt).
\end{equation}
We deduce from Lemma \ref{lem:cv-determing} that \reff{eq:limt-An} holds. 

Conversely, if \reff{eq:limt-An} holds, then Lemma \ref{lem:cv-determing}
implies that \reff{eq:tt*2} and \reff{eq:tt*3} hold. 
The fact
that 
the span of $\{D(\bt,x); \bt\in \T_0\text{ and } x\in
\cl_0(\bt)\}\bigcap \N^* $ is one and  \reff{eq:tt*} 
imply, with Bezout theorem,  that \reff{eq:CondK} holds. 
\end{proof}

\section{Examples}
\label{sec:ex}


\subsection{Conditioning on extinction after large time}
\label{ex:H>n}

We give here a simple proof of Kesten's result for the convergence in
distribution of a critical or sub-critical GW tree conditioned on
non-extinction, see \cite{k:sbrwrc} under a finite variance condition
and \cite{j:sgtcgwrac} for the general case.

\begin{prop}
  Let $\tau$ be a critical or sub-critical GW tree with offspring distribution
  $p$ satisfying Assumption \reff{eq:cond-p}. Then, we have
\begin{equation}\label{eq:H>n_2}
\dist(\tau|H(\tau)\ge n)\underset{n\to+\infty}\longrightarrow \dist(\tau^*).
\end{equation}
\end{prop}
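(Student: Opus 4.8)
The plan is to apply Theorem \ref{theo:KGen} with the functional $A(\bt)=H(\bt)$, the height of $\bt$, and with $n_0=+\infty$, so that $\A_n=\{\bt\in\T;\ H(\bt)\ge n\}$ and the limiting event $\A_\infty$ is non-extinction. First I would check the additivity property \reff{eq:A=A+B}. Grafting $\tilde\bt$ on a leaf $x$ of a finite tree $\bt$ gives
\[
H(\bt\circledast(\tilde\bt,x))=\max\bigl(H(\bt),\,|x|+H(\tilde\bt)\bigr),
\]
and as soon as $H(\tilde\bt)>H(\bt)$ (in particular whenever $A(\bt\circledast(\tilde\bt,x))$ is large) this equals $H(\tilde\bt)+|x|$. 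Hence \reff{eq:A=A+B} holds with $D(\bt,x)=|x|$. In the critical case $\mu=1$ this is all that is required, while in the sub-critical case $\mu<1$ the theorem precisely demands $D(\bt,x)=|x|$, which we have just verified; so both regimes are handled simultaneously by the same choice of $A$.

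Next I would establish the regularity condition \reff{eq:CondK}, namely $\P(H(\tau)\ge n+1)/\P(H(\tau)\ge n)\to\mu$. Let $g(s)=\sum_{n\ge 0}p(n)s^n$ be the generating function of $p$ and set $a_n=\P(H(\tau)\le n)$. Conditioning on $k_\emptyset(\tau)$ and using the branching property gives the recursion $a_{n}=g(a_{n-1})$ with $a_0=p(0)$, so that $\P(H(\tau)\ge n)=1-a_{n-1}$. Since $\mu\le 1$ the GW process is a.s. extinct, whence $a_n\uparrow 1$; moreover $a_n<1$ for every $n$, because $a_0=p(0)<1$ (using $p(0)+p(1)<1$ from \reff{eq:cond-p}) and $g$ is strictly increasing on $[0,1]$. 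This last monotonicity also guarantees $\P(\tau\in\A_n)=1-a_{n-1}>0$ for all $n$, so the positivity hypothesis of Theorem \ref{theo:KGen} is met.

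The heart of the argument is the asymptotics of the ratio. Since $\mu=g'(1^-)<+\infty$ by \reff{eq:cond-p}, the function $g$ is left-differentiable at its fixed point $1$, giving the expansion $g(1-\varepsilon)=1-\mu\varepsilon+o(\varepsilon)$ as $\varepsilon\downarrow 0$. Writing $\varepsilon_n=1-a_{n-1}=\P(H(\tau)\ge n)\to 0$, we obtain
\[
\frac{\P(H(\tau)\ge n+1)}{\P(H(\tau)\ge n)}=\frac{1-g(a_{n-1})}{1-a_{n-1}}=\frac{1-g(1-\varepsilon_n)}{\varepsilon_n}\;\underset{n\to+\infty}{\longrightarrow}\;\mu,
\]
which is exactly \reff{eq:CondK}. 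The direct (sufficient) implication of Theorem \ref{theo:KGen} then yields $\dist(\tau\mid H(\tau)\ge n)\to\dist(\tau^*)$, establishing \reff{eq:H>n_2}; observe that no span or aperiodicity assumption is needed, since we invoke only the forward direction. I expect the sole delicate point to be this controlled expansion of $g$ at $1$, especially in the critical case where the $o(\varepsilon)$ term must vanish fast enough to pass to the limit. This, however, reduces to the classical fact that $(1-g(1-\varepsilon))/\varepsilon\to g'(1^-)=\mu$, which holds precisely because the finite-mean hypothesis in \reff{eq:cond-p} makes $g$ left-differentiable at $1$ with finite derivative $\mu$.
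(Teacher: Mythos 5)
Your proposal is correct and follows essentially the same route as the paper: the same functional $A=H$ with $n_0=+\infty$, the same verification of \reff{eq:A=A+B} with $D(\bt,x)=|x|$, and the same derivation of \reff{eq:CondK} from the generating-function iteration $a_n=g(a_{n-1})$ together with $(1-g(1-\varepsilon))/\varepsilon\to g'(1^-)=\mu$. The only additions are explicit checks (positivity of $\P(\tau\in\A_n)$, $a_n<1$) that the paper leaves implicit.
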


\begin{proof}
 Consider $A(\bt)=H(\bt)$ and
     $n_0=+\infty $  that is $\A_n=\{\bt\in\T; \, H(\bt)\geq
     n\}$. 
Notice that in this case for a tree $\tilde\bt$ such that $H(\tilde
     \bt)$ is larger than $H(\bt)$, we have for every $x\in\cl_0(\bt)$
\begin{equation}
   \label{eq:AD-height}
A(\bt\circledast(\tilde\bt,x))=A(\tilde \bt)+|x|. 
\end{equation}
Therefore, Condition \reff{eq:A=A+B} is satisfied by $A$.  

According to Theorem \ref{theo:KGen}, it suffices to prove
\begin{equation}\label{eq:H>n_1}
\lim_{n\to+\infty}\frac{\P(H(\tau)\ge n+1)}{\P(H(\tau)\ge n)}=\mu
\end{equation}
to get \reff{eq:H>n_2}.

We denote by $\varphi$ the
generating function of $p$ and we define recursively $\varphi_1=\varphi$
and for $n\ge  1$, $\varphi_{n+1}=\varphi_n\circ\varphi$. As $\varphi_n$
is  the  generating  function  of  the distribution  of  $\{u\in \tau;
|u|=n\}$ the  number  of
individuals  at height $n$,  we have  $\P(\tau\in \A_n)=1-\varphi_n(0)$.
We also have $\lim_{n\rightarrow+\infty } \varphi_n(0)=1$ and
\[
\lim_{n\rightarrow+\infty } \frac{\P(\tau\in 
     \A_{n+1})}{\P(\tau\in 
     \A_{n})}
=
\lim_{n\rightarrow+\infty } \frac{1-\varphi(\varphi_{n}(0))}{1- \varphi_n(0)}
= \varphi'(1)=\mu
\]
which is \reff{eq:H>n_1}.
\end{proof}

\subsection{Conditioning on extinction at large time}
\label{ex:H=n}

\begin{prop}
  Let $\tau$  be a  critical or sub-critical  GW tree
  with offspring distribution $p$ satisfying Assumption \reff{eq:cond-p}. Then
  we have
\begin{equation}\label{eq:H=n_2}
\dist(\tau|H(\tau)=n)\longrightarrow \dist(\tau^*).
\end{equation}
\end{prop}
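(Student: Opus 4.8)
The plan is to apply Theorem \ref{theo:KGen} with the functional $A(\bt)=H(\bt)$ and with $n_0=1$, so that now $\A_n=\{\bt\in\T;\ H(\bt)=n\}$. Exactly as in the previous subsection, for a tree $\tilde\bt$ with $H(\tilde\bt)$ larger than $H(\bt)$ and for every $x\in\cl_0(\bt)$ one has $A(\bt\circledast(\tilde\bt,x))=A(\tilde\bt)+|x|$, so the additivity property \reff{eq:A=A+B} holds with $D(\bt,x)=|x|$. This is precisely the form of $D$ demanded by the theorem in the sub-critical case $\mu<1$, while no constraint on $D$ is needed when $\mu=1$; thus both branches of the hypothesis are covered at once. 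The positivity requirement $\P(\tau\in\A_n)>0$ for large $n$ is immediate, since $\varphi(s)>s$ on $[0,1)$ forces $\P(H(\tau)=n)>0$ for every $n$.

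With this setup it suffices, by Theorem \ref{theo:KGen}, to verify condition \reff{eq:CondK}, that is $\lim_{n}\P(H(\tau)=n+1)/\P(H(\tau)=n)=\mu$. Keeping the notation $\varphi_n$ for the $n$-fold composition of the generating function $\varphi$ of $p$, recall from the previous proof that $\varphi_n(0)=\P(H(\tau)\le n-1)$, whence
\[
\P(H(\tau)=n)=\varphi_{n+1}(0)-\varphi_n(0).
\]
Writing $a_n=\varphi_n(0)$, so that $a_{n+1}=\varphi(a_n)$ and $a_n\uparrow 1$ with $a_n<1$ for all $n$, the quantity to be analysed is the ratio of consecutive increments $(a_{n+2}-a_{n+1})/(a_{n+1}-a_n)$.

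The main point, and really the only work, is to show this ratio converges to $\mu$. I would apply the mean value theorem to the numerator: since $a_{n+2}-a_{n+1}=\varphi(a_{n+1})-\varphi(a_n)$, there is a point $\xi_n$ between $a_n$ and $a_{n+1}$ with
\[
\frac{a_{n+2}-a_{n+1}}{a_{n+1}-a_n}=\varphi'(\xi_n).
\]
Because $a_n\uparrow 1$ we have $\xi_n\to 1$, and by Assumption \reff{eq:cond-p} the derivative $\varphi'(s)=\sum_{k\ge 1}kp(k)s^{k-1}$ is a series of nonnegative nondecreasing terms whose value at $s=1$ is $\mu<+\infty$; hence by monotone convergence $\varphi'$ is left-continuous at $1$ and $\varphi'(\xi_n)\to\varphi'(1)=\mu$, which is exactly \reff{eq:CondK}. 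The expected obstacle is confined to this left-continuity of $\varphi'$ at the boundary point $1$, which is precisely where the finite-mean hypothesis in \reff{eq:cond-p} enters; everything else is a direct invocation of Theorem \ref{theo:KGen} and gives \reff{eq:H=n_2}.
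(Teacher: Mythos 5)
Your argument is correct and follows the paper's strategy exactly up to the last step: take $A=H$ with $n_0=1$, check the additivity property \reff{eq:A=A+B} with $D(\bt,x)=|x|$ (which is the form required by Theorem \ref{theo:KGen} when $\mu<1$), and reduce everything to the ratio condition \reff{eq:CondK}, i.e.\ $\lim_{n}\P(H(\tau)=n+1)/\P(H(\tau)=n)=\mu$. Where you genuinely diverge is in how that limit is computed. The paper also starts from $\P(H(\tau)=n)=\varphi_{n+1}(0)-\varphi_{n}(0)$, but then rewrites the ratio as $\bigl(\frac{1-\varphi(s)}{1-s}-\frac{1-\varphi_2(s)}{1-s}\bigr)\big/\bigl(1-\frac{1-\varphi(s)}{1-s}\bigr)$ with $s=\varphi_n(0)\to 1$ and evaluates it as $(\mu-\mu^2)/(1-\mu)=\mu$; as displayed this is a $0/0$ form precisely in the critical case $\mu=1$ and needs an extra word there. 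Your mean value theorem route --- $a_{n+2}-a_{n+1}=\varphi'(\xi_n)(a_{n+1}-a_n)$ with $\xi_n\in(a_n,a_{n+1})\subset[0,1)$, then $\varphi'(\xi_n)\to\varphi'(1^-)=\mu$ by monotone convergence of the nonnegative series for $\varphi'$ --- handles the critical and sub-critical cases uniformly and isolates exactly where the finite-mean hypothesis of \reff{eq:cond-p} enters, so it is arguably the cleaner computation. Your justification of $\P(H(\tau)=n)>0$ via $\varphi(s)>s$ on $[0,1)$ is also correct. No gaps.
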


\begin{proof}
We consider $A(t)=H(\bt)$  with 
  $n_0=1$   that  is   $\A_n=\{\bt\in\T; \;  H(\bt)=   n\}$. 
Since \reff{eq:AD-height} is in force, we get that
Condition \reff{eq:A=A+B} still holds. Again it suffices to prove
\begin{equation}\label{eq:H=n_1}
\lim_{n\to+\infty}\frac{\P(H(\tau)= n+1)}{\P(H(\tau)= n)}=\mu
\end{equation}
to get \reff{eq:H=n_2}.
Recall notation
  $\varphi_n$ introduced in Section \ref{ex:H>n} and that
  $\lim_{n\rightarrow+\infty } \varphi_n(0)=1$. 
We   have   $\P(\tau\in
  \A_n)=\varphi_{n+1}(0)- \varphi_{n}(0)$ and:
\[
\lim_{n\rightarrow+\infty } \frac{\P(\tau\in 
     \A_{n+1})}{\P(\tau\in 
     \A_{n})}
=
\lim_{n\rightarrow+\infty } \frac{
\frac{1-\varphi(\varphi_{n}(0))}{1- \varphi_n(0)}
- \frac{1-\varphi_2(\varphi_{n}(0))}{1- \varphi_n(0)}
}{
1
-\frac{1-\varphi(\varphi_{n}(0))}{1- \varphi_n(0)}
}
= \frac{\mu-\mu^2}{1-\mu}=\mu,
\]
which is \reff{eq:H=n_1}.
\end{proof}

\subsection{Conditioning on the total population size, critical case}
\label{sec:totI}

We recover here  results from  
Theorem 7.1 in \cite{j:sgtcgwrac} 
on the convergence in distribution
of a critical GW tree conditioned on the size of its total
progeny to Kesten's tree.

Our proof is based on
Dwass formula (see \cite{d:tpbprrw}) that we recall now.
Let $(\tau_k, k\in \N^*)$ be independent GW trees distributed as
$\tau$. Set $W_k=\Card(\tau_k)$. Let $(X_k, k\in \N^*)$ be independent
integer-valued random variables distributed according to $p$.
For $k\in \N^*$ and $n\geq k$,
we have:
\begin{equation}
   \label{eq:Dwass}
\P(W_1+\ldots+W_k=n)=\frac{k}{n}\P(X_1+ \ldots+X_n=n-k).
\end{equation}

We also recall some results on random walks. 
Let $Y$  be an  integrable random variable  taking values in  $\Z$, such
that $\E[Y]=0$, $\P(Y=0)<1$ and the span  of $|Y|$ is 1. We consider the
random walk $S=(S_n, n\in \N)$ defined by:
\begin{equation}
   \label{eq:defS}
S_0=0 \quad\text{and}\quad S_n=\sum_{k=1}^n Y_k \quad\text{for $n\in \N^*$}.
\end{equation}
 Then the 
random walk $S$ is  recurrent. We define the period of $S$ as the span
of the set $\{n>0,\ \P(S_n=0)>0\}$. If $S$  is aperiodic (i.e. has
period 1), the
strong ratio  theorem for recurrent aperiodic random  walks, see Theorem
T1 p49 of \cite{s:prw}, gives that, for $\ell \in \Z$:
\begin{equation}
   \label{eq:SRTh}
\lim_{n\rightarrow+\infty } \frac{\P(S_n=\ell)}{\P(S_n=0)}=
\lim_{n\rightarrow+\infty} \frac{\P(S_n=0)}{\P(S_{n+1}=0)}=1.
\end{equation}

If $S$  has period $d$, then for all $k\in \{1, \ldots, d\}$, there
exist $j_k \in \Z$ and $n_k\in\N^*$
such that 
\begin{equation}
   \label{eq:jd}
\forall n\ge n_k,\quad\P(S_{nd+k}=j_k)>0.
\end{equation}
The strong ratio  theorem can then easily be adapted 
to get that, for $\ell \in
\Z$, $k\in \{ 1, \ldots, d\}$:
\begin{equation}
   \label{eq:SRlem}
\lim_{m\rightarrow+\infty } \frac{\P(S_{md+k}=\ell d+j_k)}{\P(S_{md}=0)}=1.
\end{equation}
Notice that \reff{eq:Dwass} and \reff{eq:SRlem} directly imply that
the total progeny distribution enjoys the local sub-exponential
property, see \cite{afk:asrvlsb}.

\begin{prop}\label{prop:totI}
Let  $\tau$ be  a critical GW  tree with  offspring distribution   $p$ satisfying
  Assumption   \reff{eq:cond-p}.  Let $d$ be the span of
  $\Card(\tau)-1$ (that is the span of the set
$\{k>0,\ p(k)>0\}$). Then we have
\begin{equation}\label{eq:totI_2}
\dist(\tau|\Card(\tau)=nd+1)\underset{n\to+\infty}{\longrightarrow} \dist(\tau^*)
\end{equation}
and
\begin{equation}\label{eq:totII_2}
\dist(\tau|\Card(\tau)\ge n)\underset{n\to+\infty}{\longrightarrow} \dist(\tau^*).
\end{equation}
\end{prop}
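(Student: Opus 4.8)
The plan is to apply Theorem \ref{theo:KGen} twice, once for each conditioning, choosing $A$ from $\Card$. Since $\tau$ is critical we have $\mu=1$, so the first alternative of Theorem \ref{theo:KGen} is automatically in force and there is no constraint on $D$; in both cases it therefore suffices to exhibit the additivity property \reff{eq:A=A+B} and to verify the ratio condition \reff{eq:CondK}. For \reff{eq:totI_2}, recall from \reff{eq:sum_k} that $\Card(\bt)-1=\sum_{u\in\bt}k_u(\bt)$ is a sum of out-degrees, each of which is a multiple of $d$; hence $\Card(\bt)-1\in d\N$ for every $\bt\in\T_0$ and $A(\bt):=(\Card(\bt)-1)/d$ is an integer-valued functional, finite on $\T_0$. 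Grafting $\tilde\bt$ identifies its root with the leaf $x$, so $\Card(\bt\circledast(\tilde\bt,x))=\Card(\bt)+\Card(\tilde\bt)-1$ and thus $A(\bt\circledast(\tilde\bt,x))=A(\tilde\bt)+D(\bt,x)$ with $D(\bt,x)=(\Card(\bt)-1)/d$ for every $\tilde\bt$; so \reff{eq:A=A+B} holds, and with $n_0=1$ we have $\A_n=\{\bt\in\T;\ \Card(\bt)=nd+1\}$.

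It remains to prove \reff{eq:CondK}, i.e. $\P(\Card(\tau)=(n+1)d+1)/\P(\Card(\tau)=nd+1)\to 1$. To this end I use Dwass' formula \reff{eq:Dwass} with $k=1$: writing $S_m=\sum_{i=1}^m(X_i-1)$ for the centred walk with increment $Y=X_1-1$, we get $\P(\Card(\tau)=m)=\frac{1}{m}\,\P(X_1+\cdots+X_m=m-1)=\frac{1}{m}\,\P(S_m=-1)$. The walk $S$ fits the framework preceding \reff{eq:SRTh}: $\E[Y]=0$ by criticality, $\P(Y=0)=p(1)<1$ since $p(0)>0$, and the span of $|Y|$ equals $1$ because $p(0)>0$ forces the value $Y=-1$. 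As every $X_i$ lies in $d\N$ one has $S_m\equiv -m\pmod d$, so the period of $S$ is exactly $d$. Applying the periodic strong ratio theorem \reff{eq:SRlem} with $k=1$ (the value $-1$ lying in the residue class $d-1$) yields $\P(S_{md+1}=-1)\sim\P(S_{md}=0)$, and with $k=d$ it yields $\P(S_{(m+1)d}=0)\sim\P(S_{md}=0)$. Chaining these, $\frac{\P(S_{(n+1)d+1}=-1)}{\P(S_{nd+1}=-1)}=\frac{\P(S_{(n+1)d+1}=-1)}{\P(S_{(n+1)d}=0)}\cdot\frac{\P(S_{(n+1)d}=0)}{\P(S_{nd}=0)}\cdot\frac{\P(S_{nd}=0)}{\P(S_{nd+1}=-1)}\to 1$; combined with the prefactor $\frac{nd+1}{(n+1)d+1}\to1$ this is precisely \reff{eq:CondK}. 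In passing \reff{eq:SRlem} also gives $\P(\tau\in\A_n)=\frac{1}{nd+1}\P(S_{nd+1}=-1)>0$ for $n$ large, as required, and Theorem \ref{theo:KGen} yields \reff{eq:totI_2}.

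For \reff{eq:totII_2} I instead take $A(\bt)=\Card(\bt)$ and $n_0=+\infty$, so $\A_n=\{\bt\in\T;\ \Card(\bt)\ge n\}$; the same identity $\Card(\bt\circledast(\tilde\bt,x))=\Card(\bt)+\Card(\tilde\bt)-1$ gives \reff{eq:A=A+B} with $D(\bt,x)=\Card(\bt)-1$, and $\P(\tau\in\A_n)>0$ for all $n$ since the total progeny is unbounded. Now \reff{eq:CondK} reads $\P(\Card(\tau)\ge n+1)/\P(\Card(\tau)\ge n)\to 1$. Set $a_N=\P(\Card(\tau)=Nd+1)$ (the only nonzero point masses, by the lattice structure) and $T_N=\sum_{k\ge N}a_k=\P(\Card(\tau)\ge Nd+1)$. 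The point-ratio limit $a_{N+1}/a_N\to 1$ proven above shows that, for any $\delta>0$ and all large $N$, $a_{N+j}\ge(1-\delta)^j a_N$ for all $j\ge 0$, whence $T_N\ge a_N\sum_{j\ge0}(1-\delta)^j=a_N/\delta$ and therefore $a_N/T_N\le\delta$. Hence $a_N/T_N\to 0$, i.e. $T_{N+1}/T_N\to 1$. Since the tail $\P(\Card(\tau)\ge\cdot)$ is constant across the indices lying between two consecutive lattice points, the ratio $\P(\Card(\tau)\ge n+1)/\P(\Card(\tau)\ge n)$ is always either $1$ or some $T_{N+1}/T_N$, and in both cases tends to $1$. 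This gives \reff{eq:CondK}, and Theorem \ref{theo:KGen} yields \reff{eq:totII_2}.

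The only genuinely delicate point is the bookkeeping of the period $d$: one must pass to the rescaled functional $(\Card-1)/d$ so that $\P(\tau\in\A_n)>0$ for \emph{all} large $n$, and one must invoke the periodic version \reff{eq:SRlem} of the strong ratio theorem (rather than the aperiodic \reff{eq:SRTh}) for both the numerator $\P(S_{md+1}=-1)$ and the normaliser $\P(S_{md}=0)$. Once the point-probability ratio is in hand, the passage to the tail ratio needed for the $\{\Card(\tau)\ge n\}$ conditioning is the elementary geometric-domination estimate above, so no separate subexponentiality input is required beyond \reff{eq:Dwass} and \reff{eq:SRlem}.
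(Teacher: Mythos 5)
Your proof is correct and follows essentially the same route as the paper's: Dwass' formula with $k=1$ plus the periodic strong ratio theorem \reff{eq:SRlem} to obtain the point-mass ratio, then Theorem \ref{theo:KGen}, with the tail conditioning deduced from the point-mass case (a deduction the paper calls ``straightforward'' and you spell out correctly). The only difference is bookkeeping: the paper keeps $A=\Card$ and absorbs the period into the window $n_0=d$, whereas you rescale to $A=(\Card(\bt)-1)/d$ with $n_0=1$ --- which works, though strictly $(\Card(\bt)-1)/d$ is an integer only for trees whose out-degrees lie in the support of $p$, not for \emph{every} $\bt\in\T_0$ as you assert; this is a harmless imprecision since all other trees carry zero mass under both $\tau$ and $\tau^*$.
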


\begin{rem}
\label{rem:recip}
If we consider $A(\bt)=\Card (\bt)$ and $n_0=+\infty$ that is 
$\A_n=\{\bt\in\T,\ \Card(\bt)\ge n\}$, the converse of
Theorem \ref{theo:KGen} gives the sub-exponential property:
\begin{equation}\label{eq:totII_1}
\lim_{n\to+\infty}\frac{\P(\Card(\tau)\ge n+1)}{\P(\Card(\tau)\ge n)}=1.
\end{equation}
\end{rem}

\begin{proof}[Proof of Proposition \ref{prop:totI}]
Consider $A(\bt)=\Card(\bt)$  and
  $n_0=d$. Then we have
\[
\A_n=\{\bt\in\T;\;   \Card(\bt)\in [n, n+d)\}.
\]
We have for every $\bt\in\T$, without any additional assumption,
\begin{equation}
   \label{eq:A=2A}
A(\bt\circledast(\tilde\bt,x))=A(\tilde \bt)+A(\bt),
\end{equation}
so Condition \reff{eq:A=A+B} holds.
Again, it therefore suffices to prove
\begin{equation}\label{eq:totI_1}
\lim_{n\to+\infty}\frac{\P(\Card(\tau)\in[n+1,n+1+d))}{\P(\Card(\tau)\in[n,n+d))}=1
\end{equation}
to get \reff{eq:totI_2}.
By the definition of $d$, a.s. we have $A(\tau)\in d\N+1$. We consider
an integer valued random variable $X$ distributed according to $p$ and
we set $Y=X-1$ so that $\E[Y]=0$ since we supposed that $\mu=1$.
 The random walk defined by \reff{eq:defS}
has period $d$ and we can choose $j_1=-1$ in \reff{eq:jd} as
$\P(Y=-1)>0$.  Dwass formula
\reff{eq:Dwass} implies that, for $k=\lfloor (n-1)/d \rfloor$:
\[
\P(\tau\in \A_n)=\P(A(\tau)\in [n,n+d))=\P(A(\tau)=kd+1)=
\inv{kd+1}\P(S_{kd+1}=-1). 
\]
Using \reff{eq:SRlem},  we deduce that:
\[
\lim_{n\rightarrow+\infty} \frac{\P(\tau\in \A_{n+1})}{\P(\tau\in \A_n)}
= \lim_{k\rightarrow+\infty} \frac{\P(S_{(k+1)d+1}=-1)}{\P(S_{kd+1}=-1)}=1
\]
which readily implies \reff{eq:totI_1}.

The second assertion \reff{eq:totII_2} is then a straightforward
consequence of \reff{eq:totI_1}.
\end{proof}

\begin{rem}
   \label{rem:equiv}
Notice that the local limit theorem  gives asymptotics for $\P(S_n=-1)$
when the distribution of $X$ belongs to the domain of attraction of a
stable law, see Theorem 4.2.1 of \cite{il:issrv} or Theorem 1.10 in
\cite{k:ipgwcnl}.  This gives asymptotics for $\P(\tau\in \A_n)$ which
in turns allow to recover  Condition  \reff{eq:CondK}. 
\end{rem}

\subsection{Conditioning on the number of leaves, critical case}
\label{sec:L0}

For a finite tree $\bt\in\T_0$, we denote by $L_0(\bt)=\Card
(\cl_0(\bt))$ the number of leaves of $\bt$. The next proposition
(which seems to be a new result) is in fact a particular case of the
proposition of the next section. However, we prove it separately for
methodological purpose as its proof and in particular the construction of
the GW tree that codes $\cl_0(\bt)$ of Remark
\ref{rem:GW-leaves} are much simpler in that particular case.

\begin{prop}
  Let  $\tau$ be  a critical  GW  tree with  offspring distribution   $p$ satisfying
  Assumption   \reff{eq:cond-p}. Let $d_0$ be the span of the random variable
$L_0(\tau)-1$. Then we have
\begin{equation}\label{eq:L0_2}
\dist(\tau|L_0(\tau)=nd_0+1)\underset{n\to+\infty}\longrightarrow \dist(\tau^*)
\end{equation}
and
\begin{equation}\label{eq:L0II_2}
\dist(\tau|L_0(\tau)\ge n)\underset{n\to+\infty}\longrightarrow \dist(\tau^*).
\end{equation}
\end{prop}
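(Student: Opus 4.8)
The plan is to invoke Theorem~\ref{theo:KGen} with the functional $A(\bt)=L_0(\bt)$, taking $n_0=d_0$ for \reff{eq:L0_2} and $n_0=+\infty$ for \reff{eq:L0II_2}. First I would verify the additivity property \reff{eq:A=A+B}. Grafting a tree $\tilde\bt$ on a leaf $x$ of $\bt$ turns $x$ into an internal vertex whose descendant leaves are exactly the leaves of $\tilde\bt$, so that for all $\bt\in\T_0$, $x\in\cl_0(\bt)$ and $\tilde\bt\in\T_0$,
\[
L_0\big(\bt\circledast(\tilde\bt,x)\big)=L_0(\tilde\bt)+\big(L_0(\bt)-1\big),
\]
an identity that even holds for the one-vertex tree $\tilde\bt$. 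Hence \reff{eq:A=A+B} holds with $D(\bt,x)=L_0(\bt)-1$. Since $\tau$ is critical we have $\mu=1$, so the first alternative in the hypotheses of Theorem~\ref{theo:KGen} is met and it only remains to establish the ratio condition \reff{eq:CondK}, i.e.\ that $\P(L_0(\tau)\in[n+1,n+1+n_0))/\P(L_0(\tau)\in[n,n+n_0))$ tends to $\mu=1$.

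The heart of the argument, and what I expect to be the main obstacle, is to realize the number of leaves as a total progeny: I would construct a critical GW tree $\hat\tau$, with offspring distribution $\hat p$ satisfying \reff{eq:cond-p}, such that $L_0(\tau)\overset{d}{=}\Card(\hat\tau)$ (this is the content of Remark~\ref{rem:GW-leaves}). Writing $\varphi$ for the generating function of $p$ and $f(s)=\E[s^{L_0(\tau)}]$, a first-generation decomposition gives $f=\varphi(f)+p(0)(s-1)$; comparing this with the total-progeny equation $g=s\,\hat\varphi(g)$ forces
\[
\hat\varphi(y)=\frac{p(0)\,y}{p(0)+y-\varphi(y)},
\]
and one checks $\hat\varphi(1)=1$ and, using $\mu=1$, that $\hat\varphi'(1)=1$, so that $\hat\tau$ is critical. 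The delicate point is that $\hat\varphi$ must have non-negative Taylor coefficients for $\hat p$ to be a genuine probability distribution; rather than proving this by hand, I would supply in Remark~\ref{rem:GW-leaves} a direct recursive coding of the leaves of $\tau$ by a GW tree, which exhibits $\hat\tau$ as an honest GW tree and so makes non-negativity automatic.

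With this representation in hand the rest is mechanical. The identity $L_0(\tau)\overset{d}{=}\Card(\hat\tau)$ shows that the span of $L_0(\tau)-1$ equals the span of $\Card(\hat\tau)-1$, namely $d_0$. I would then run the ratio computation of Proposition~\ref{prop:totI} (Dwass' formula \reff{eq:Dwass} together with the strong ratio theorem \reff{eq:SRlem}) for the critical tree $\hat\tau$, which yields
\[
\lim_{n\to+\infty}\frac{\P(\Card(\hat\tau)\in[n+1,n+1+d_0))}{\P(\Card(\hat\tau)\in[n,n+d_0))}=1 ,
\]
and this translates, via $L_0(\tau)\overset{d}{=}\Card(\hat\tau)$, into exactly \reff{eq:CondK} with $\mu=1$ and $n_0=d_0$.

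Finally I would conclude. Theorem~\ref{theo:KGen} gives $\dist(\tau\mid\tau\in\A_n)\to\dist(\tau^*)$; since $L_0(\tau)\in d_0\N+1$ almost surely, a half-open window of length $d_0$ contains exactly one attainable value, so for $n=md_0+1$ the event $\{\tau\in\A_n\}$ coincides with $\{L_0(\tau)=md_0+1\}$, and letting $m\to+\infty$ yields \reff{eq:L0_2}. The tail statement \reff{eq:L0II_2} follows by taking $n_0=+\infty$ and deducing the ratio condition for $\P(L_0(\tau)\ge n)$ from the local one, exactly as \reff{eq:totII_2} was obtained from \reff{eq:totI_1} in Proposition~\ref{prop:totI}.
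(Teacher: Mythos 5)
Your proposal is correct and follows essentially the same route as the paper: verify the additivity \reff{eq:A=A+B} with $D(\bt,x)=L_0(\bt)-1$, realize $L_0(\tau)$ as the total progeny of a critical GW tree (the paper's Remark~\ref{rem:GW-leaves} gives exactly the direct coding you defer to, with offspring law $X_0$ whose generating function coincides with your $\hat\varphi$), import the ratio limit \reff{eq:totI_1} from the total-progeny case, and conclude via Theorem~\ref{theo:KGen}. Your extra derivation of $\hat\varphi$ from the functional equation, and your explicit remark that the length-$d_0$ window contains a single attainable value, are just spelled-out versions of steps the paper leaves implicit.
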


\begin{proof}
We consider $A(\bt)=L_0(\bt)$  and
  $n_0= d_0$   which yields $\A_n=\{\bt\in \T;\;   L_0(\bt) \in [n,
  n+d_0)\}$. 
We have for every trees $\bt,\tilde\bt\in\T_0$ and every $x\in\cl_0(\bt)$
\begin{equation}
   \label{eq:A=2A-1}
A(\bt\circledast(\tilde\bt,x))=A(\tilde \bt)+A(\bt)-1.
\end{equation}

 According  to \cite{m:nvgdgwt}, see also Remark
\ref{rem:GW-leaves} below, $L_0(\tau)$ is  distributed as the total size
of a  critical GW tree $\tau_0$ with  offspring distribution given
by the distribution of:
\begin{equation}
   \label{eq:reprod-leaf}
X_0=\sum_{k=1}^{N-1} Z_k,
\end{equation}
with $(Z_k, k\in \N^*)$ and  $N$  independent random variables such that 
$(Z_k, k\in \N^*)$ are  independent and  distributed as
$X-1$ conditionally on $\{X \geq 1\}$ (where $X$ is a random
variable distributed according to $p$) and $N$ has a 
geometric distribution with parameter $p(0)$. As $\E[X_0]=1$, we get that $\tau_0$ is
critical. Notice that $d_0$ is also the span of the random variable
$X_0$. 

It follows from \reff{eq:totI_1} that:
\begin{equation}\label{eq:L0_1}
\lim_{n\to+\infty}\frac{\P(L_0(\tau)\in[n+1,n+1+d_0))}{\P(L_0(\tau)\in[n,n+d_0))}=1.
\end{equation}
Then use  Theorem
\ref{theo:KGen} to get that \reff{eq:L0_2} holds.

If we consider $n_0=+\infty $ that is:
\[
\A_n=\{\bt\in\T_0;\;   L_0(\bt)\geq    n\},
\]
arguing as in the proof of the second part of Proposition \ref{prop:totI},
we get \reff{eq:L0II_2}.
\end{proof}

\begin{rem}
We deduce from Remark \ref{rem:recip} that  \reff{eq:L0II_2} implies
$$\lim_{n\to+\infty}\frac{\P(L_0(\tau)\ge n+1)}{\P(L_0(\tau)\ge n)}=1.$$
\end{rem}

\begin{rem}
   \label{rem:GW-leaves}
We shall briefly recall how one can prove that $L_0(\tau)$ is
distributed as the total size of a GW process by mapping the
set of leaves $\cl_0(\tau)$ onto a GW tree, see
\cite{m:nvgdgwt,r:slmbtgwtcnvodgs} for details. 

Let $\bt$ be a tree. 
For $u\in \bt$, we define the left branch starting from $u$ as:
\[
B_g^\bt(u)=\{uv;\, |v|\geq 1 \quad \text{and} \quad v=\{1\}^{|v|}\}\cap \bt.
\]
We also define the left leaf $G(u)$ of $u$ and the left ancestors $A_g(v)$ of a leaf $v$ as:
\[
G^\bt(u)=B_g^\bt(u)\cap \cl_0(\bt)
\quad\text{and}\quad
A_g^\bt(v)=\{u\in A_v; \, G^\bt(u)=v\}.
\]
For a leaf $v\in \cl_0(\bt)$, we define its leaf-children as:
\[
C^\bt(v)=\{G^\bt(ui); \, u\in A_g^\bt(v), 1<i\leq k_u(\bt)\},
\]
labeled  according to  the following  order:  $G^\bt(ui)<G^\bt(u'i')$ if
$u<u'$ in the lexicographic order  or if $u=u'$ and $i<i'$. This defines
a   tree,   obtained   from    the   leaves   of   $\bt$,   denoted   by
$\bt_{\{0\}}=F_{\{0\}}(\bt)$.       And       we       have       $\Card
(\bt_{\{0\}})=L_0(\bt)$.

If $\tau$ is a GW  tree then $\tau_{\{0\}}=F_{\{0\}}(\tau)$ is also a GW
tree  with  offspring distribution  given  by  the  distribution of  $X_0$  in
\reff{eq:reprod-leaf}.
\end{rem}

\begin{center}
\begin{figure}[H]\label{fig:L0}
\includegraphics[width=13cm]{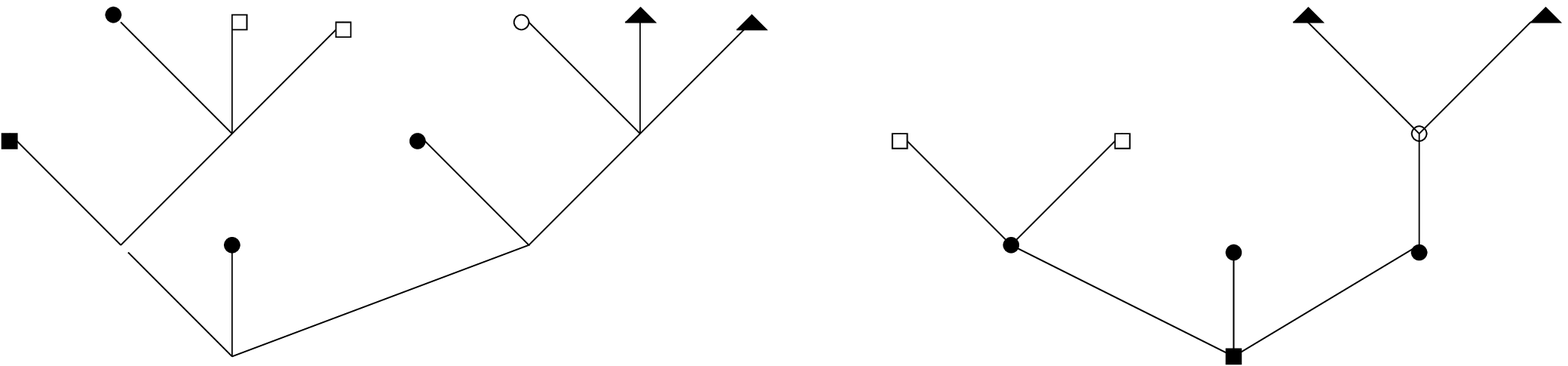}
\caption{A tree $\bt$ on the left and the coding of $\cl_0(\bt)$ by a
  tree $\bt_0=F(\bt)$ tree on the right.}
\end{figure}
\end{center}


\section{Conditioning on the number of individuals having a given
  number of children}
\label{sec:LA}

Let $\ca$ be a non-empty subset of $\N$. For a tree $\bt\in\T$, we write $\cl_\ca(\bt)=\{u\in \bt;
\; k_u(\bt)\in \ca\}$ the set of individuals whose number of children belongs
to $\ca$ and $L_\ca(\bt)=\Card(\cl_\ca(\bt))$ its cardinal. The case
$\ca=\{0\}$ represents the set of leaves of $\bt$ and has been treated
in Section \ref{sec:L0}. We can also have $L_\ca(\bt)=\Card(\bt)$ by
taking $\ca=\N$ or $L_\ca(\bt)$ can also be the number of internal nodes by
taking $\ca=\N^*$.

We set:
\[
p(\ca)=\sum_{k\in \ca}p(k).
\]

\subsection{The critical case}

Let us first remark that for every $\bt\in\T_0$, every $x\in\cl_0(\bt)$
and every $\tilde\bt\in\T$
\[
L_\ca(\bt\circledast(\tilde\bt,x))=\begin{cases}
L_\ca(\bt)+L_\ca(\tilde\bt)-1 & \mbox{if }0\in\ca,\\
L_\ca(\bt)+L_\ca(\tilde\bt) & \mbox{if }0\not\in\ca,
\end{cases}
\]
and hence $L_\ca$ satisfies the additive property \reff{eq:A=A+B} with
$D(\bt,x)=L_\ca(\bt)-\ind_{\{0\in \ca\}}$.

\begin{theo}\label{theo:LA}
Let  $\tau$ be  a critical GW  tree with offspring distribution   $p$ satisfying
  Assumption   \reff{eq:cond-p} and such that $p(\ca)>0$.  Let $d_\ca$ be the span of the random variable
$L_\ca(\tau)-1$. Then we have
\begin{equation}\label{eq:LA_2}
\dist(\tau|L_\ca(\tau)=nd_\ca+1)\underset{n\to+\infty}{\longrightarrow} \dist(\tau^*)
\end{equation}
and
\begin{equation}\label{eq:LA2_2}
\dist(\tau|L_\ca(\tau)\ge n)\underset{n\to+\infty}{\longrightarrow} \dist(\tau^*).
\end{equation}
\end{theo}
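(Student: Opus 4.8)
The plan is to reduce Theorem \ref{theo:LA} to the already-proven total-progeny case (Proposition \ref{prop:totI}) via a coding of $\cl_\ca(\tau)$ by another critical GW tree, exactly mirroring the proof in Section \ref{sec:L0}. First I would verify the structural hypotheses of Theorem \ref{theo:KGen}: the additive property \reff{eq:A=A+B} for $A=L_\ca$ has already been established above with $D(\bt,x)=L_\ca(\bt)-\ind_{\{0\in\ca\}}$, and since $\tau$ is critical ($\mu=1$) we are in the first branch of the theorem's dichotomy, so no constraint linking $D(\bt,x)$ to $|x|$ is needed. The positivity $\P(\tau\in\A_n)>0$ for large $n$ follows from $p(\ca)>0$, which guarantees that arbitrarily many vertices with out-degree in $\ca$ can occur. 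Thus, as in the earlier examples, it suffices to establish the ratio condition
\begin{equation}
   \label{eq:LA_1}
\lim_{n\to+\infty}\frac{\P(L_\ca(\tau)\in[n+1,n+1+d_\ca))}{\P(L_\ca(\tau)\in[n,n+d_\ca))}=1,
\end{equation}
which is \reff{eq:CondK} with $\mu=1$ and $n_0=d_\ca$, to obtain \reff{eq:LA_2}; the inequality version \reff{eq:LA2_2} then follows by the same telescoping/summation argument used for \reff{eq:totII_2}.

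The key step is to exhibit a critical GW tree $\tau_\ca$ whose total progeny has the law of $L_\ca(\tau)$. I would generalize the leaf-coding map $F_{\{0\}}$ of Remark \ref{rem:GW-leaves} to a map $F_\ca$ that contracts $\tau$ onto the genealogy of its $\ca$-vertices (the elements of $\cl_\ca(\tau)$), producing a tree $\tau_\ca$ with $\Card(\tau_\ca)=L_\ca(\tau)$. By the branching property, $\tau_\ca$ is again a GW tree; its offspring law is that of the number of $\ca$-vertices encountered as immediate $\ca$-descendants of a given $\ca$-vertex, which can be written explicitly as a random sum analogous to \reff{eq:reprod-leaf} (a geometric-type number of excursions into the non-$\ca$ part of the tree, each contributing a random number of $\ca$-children). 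The essential numerical check is that $\E[X_\ca]=1$, i.e. that $\tau_\ca$ is critical: this should follow from $\mu=1$ together with \reff{eq:sum_k}, since the expected number of $\ca$-offspring per $\ca$-vertex equals the expected number of children times the density of $\ca$-vertices, and criticality of $\tau$ forces this to be $1$. I would also note that $d_\ca$, the span of $L_\ca(\tau)-1$, coincides with the span of the offspring variable $X_\ca$, so the period $d$ appearing in Proposition \ref{prop:totI} matches.

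With $\tau_\ca$ in hand, \reff{eq:LA_1} is immediate: it is exactly \reff{eq:totI_1} applied to the critical tree $\tau_\ca$, since $\Card(\tau_\ca)$ and $L_\ca(\tau)$ are equal in distribution and share the span $d_\ca$. I then feed \reff{eq:LA_1} into Theorem \ref{theo:KGen} to conclude \reff{eq:LA_2}, and pass to \reff{eq:LA2_2} by summing the asymptotic ratio over residue classes as in the second part of Proposition \ref{prop:totI}. I expect the main obstacle to be the precise combinatorial description and measurability of the contraction map $F_\ca$ and the verification that $\tau_\ca$ is genuinely Galton-Watson with the claimed offspring law; the cases $0\in\ca$ and $0\notin\ca$ differ slightly (reflected in the $\ind_{\{0\in\ca\}}$ term in $D$), but since criticality is the only quantitative input required from the coding, the argument is robust to this distinction and the identity $\E[X_\ca]=1$ is what ultimately drives the whole reduction.
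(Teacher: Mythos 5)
Your proposal follows essentially the same route as the paper: the additive property with $D(\bt,x)=L_\ca(\bt)-\ind_{\{0\in\ca\}}$, the coding of $\cl_\ca(\tau)$ as a critical GW tree $\tau_\ca$ whose total progeny is $L_\ca(\tau)$ (the paper takes this construction and its criticality from Rizzolo's Theorem 6 and Lemma 6 rather than re-deriving them), the ratio condition \reff{eq:LA_1} imported from the total-progeny case, and the conclusion via Theorem \ref{theo:KGen}. The only detail worth making explicit is that when $0\notin\ca$ the identification of $L_\ca(\tau)$ with a total progeny holds conditionally on $\{\cl_\ca(\tau)\ne\emptyset\}$ (whence $q<1$ and $d_\ca=1$), which does not affect the asymptotic ratio.
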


\begin{rem}
It is interesting to note that previous works
\cite{r:slmbtgwtcnvodgs,k:ipgwcnl} studying conditioned GW 
trees involving $L_\ca$ required additional assumptions on the moments
of $p$ or on $\ca$ (finite variance offspring distribution and
$0\in\ca$ in \cite{r:slmbtgwtcnvodgs}, and offspring distribution $p$
in the domain of attraction of a stable law with either $\ca$ or
$\N\setminus\ca$ finite in the case of infinite variance offspring
distribution in \cite{k:ipgwcnl}).
\end{rem}

\begin{rem}
In the proof of Theorem \ref{theo:LA}, we will see that if $0\not\in\ca$, then $d_\ca=1$.
\end{rem}

\begin{rem}
As a corollary, we get the following result, which is proven using the
same technique as in Remark \ref{rem:recip}:
\begin{equation}\label{eq:LA2_1}
\lim_{n\to+\infty}\frac{\P(L_\ca(\tau)\ge n+1)}{\P(L_\ca(\tau)\ge n)}=1.
\end{equation}
\end{rem}

\begin{proof}[Proof of Theorem \ref{theo:LA}]

In what follows, we denote by $X$ a random variable distributed
according to $p$. 
We consider only $\P(X\in \ca)<1$, as the case $\P(X\in
\ca)=1$  corresponds  to the  critical  case  with  $\ca=\N$ of  Section
\ref{sec:totI}.    

For a tree $\bt$ such that
$\cl_\ca(\bt)\ne\emptyset$,  following \cite{r:slmbtgwtcnvodgs}, we
can map the set $\cl_\ca(\bt)$ onto a tree $\bt_\ca$. We first define
a map $\phi$ from $\cl_\ca(\bt)$ on $\cu$ and a sequence
$(\bt_k)_{1\le k\le n}$ of
trees (where $n=L_\ca(\bt)$) as follows.
Recall that we denote by $<$ the lexicographic order on $\cu$. Let
$u^1<\cdots<u^n$ be the ordered elements of $\cl_\ca(\bt)$.
\begin{itemize}
\item $\phi(u^1)=\emptyset$, $\bt_1=\{\emptyset\}$.
\item For $1<k\le n$, recall that $S_{M(\{u^{k-1},u^k\})}(\bt)$
  denotes the
  tree above the most recent common ancestor of $u^{k-1}$ and $u^k$,
  and we set $\bs=\{M(\{u^{k-1},u^k\})u,u\in
  S_{M(\{u^{k-1},u^k\})}(\bt)\}$ and $v=\min(\cl_\ca(\bs))$.
Then, we set
$$\phi(u^k)=\phi(v)(k_\phi(v)(\bt_{k-1})+1)$$
the concatenation of the node $\phi(v)$ with the integer $k_\phi(v)(\bt_{k-1})+1$,
and
$$\bt_k=\bt_{k-1}\cup\{\phi(u^k)\}.$$
In other words, $\phi(u^k)$ is a child of $\phi(v)$ in $\bt_k$ and we add it
``on the right'' of the other children (if any) of $\phi(v)$ in the previous
tree $\bt_{k-1}$ to get $\bt_k$.
\end{itemize}
It is clear by construction that $\bt_k$ is a tree for every $k\le n$. We
set $\bt_\ca=\bt_n$. Then $\phi$ is a one-to-one map from
$\cl_\ca(\bt)$ onto $\bt_\ca$.
The construction of the tree $\bt_\ca$ is illustrated on Figure \ref{fig:tA}.

\begin{figure}[H]
\includegraphics[width=12cm]{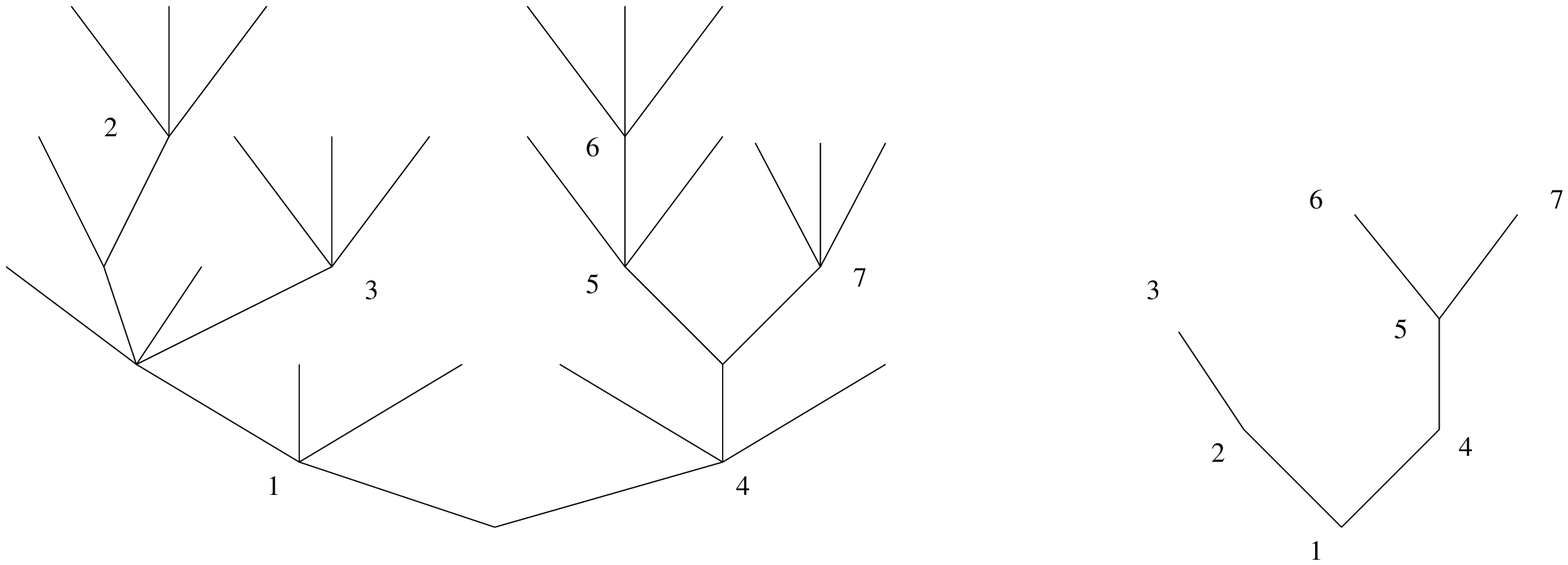}
\caption{left: a tree $\bt$, right: the tree $\bt_\ca$ for $\ca=\{3\}$}\label{fig:tA}
\end{figure}

If $\tau$ is a GW tree with offspring distribution $p$, 
the tree $\tau_\ca$ associated with $\cl_\ca(\tau)$, conditioned on
$\cl_\ca(\tau)\ne\emptyset$,  is then, according
to \cite{r:slmbtgwtcnvodgs} Theorem 6,
 a GW tree  whose offspring distribution
is the law of the random variable $X_\ca$ defined as follows:
\begin{itemize}
\item Let $(X_i,i\ge 1)$ be a sequence of independent random variables
  distributed according to $p$.
\item Let $N=\inf\{k,\ X_k\in\ca\}$ and
  $T=\inf\{k,\ \sum_{i=1}^k(X_i-1)=-1\}$.
\item Let $\tilde X$ be a r.v. distributed as
$$1+\sum_{i=1}^N(X_i-1)$$
conditioned on $N\le T$.
\item Then $X_\ca$ is distributed conditionally given $\{\tilde X=k\}$ as
  a binomial r.v. with parameters $k$ and $q=\P(N\le T)=\P(\cl_\ca(\tau)\ne\emptyset)$.
\end{itemize}
Moreover, as $\tau$ is critical,  $\tau_\ca$ (conditioned on
$\{\cl_\ca(\tau)\ne\emptyset\}$) is also critical, see
\cite{r:slmbtgwtcnvodgs} Lemma 6.

Then, $L_\ca(\tau)$ is just the total progeny of $\tau_\ca$.
Remark that $d_\ca$ is also the span of $X_\ca$. Remark that, if
$0\in\ca$, then $L_\ca(\tau)>0$ and thus $q=1$ and $X_\ca=\tilde
X$. Notice that  we may have
$d_\ca>1$. On the contrary, if $0\not\in\ca$, we have $q<1$ and
therefore $\P(X_\ca=1)>0$. As a consequence, we have $d_\ca=1$.

Consider 
  $n_0=d_\ca$   which gives  
\[
\A_n=\{\bt\in\T;\;   L_\ca(\bt)\in [  n , n+d_\ca)\}.
\]
As $L_\ca(\tau)$ , conditioned on being positive, is distributed as the total size of a critical GW tree,
we deduce  from Subsection \ref{sec:totI}  that 
\begin{equation}\label{eq:LA_1}
  \lim_{n\to+\infty}\frac{\P(L_\ca(\tau)\in[n+1,n+1+d_\ca))}{\P(L_\ca(\tau)\in[n,n+d_\ca))}=1
\end{equation}
 and thus by Theorem \ref{theo:KGen} that \reff{eq:LA_2}
holds. $\square$

\end{proof}

\subsection{The sub-critical case}

Let  $p$  be  an offspring distribution.   Let  $\ca\subset  \N$  such  that
$p(\ca)>0$.  For every  $\theta>0$  such that  $\sum_{k\in \N}  \theta^k
p(k)$ is finite, we define on $\N$ the function $p_\theta$ by
$$\forall k\ge 0,\ p_\theta(k)=\begin{cases}
c_\ca(\theta)\theta^kp(k) & \mbox{if }k\in\ca,\\
\theta^{k-1}p(k) & \mbox{if } k\not\in\ca
\end{cases}$$
where the normalizing constant  $c_\ca(\theta)$ is given by:
\[
c_\ca(\theta)=\frac{1-\sum_{k\not\in
    \ca}\theta^{k-1}p(k)}{\sum_{k\in\ca}\theta^kp(k)}\cdot
\]

We denote by $I$ the set of $\theta$ such that $p_\theta$ defines a probability
distribution on $\N$. Notice that $I$ is an interval with bounds
$\theta_0<1\leq \theta_1$. We have the special cases $\theta_0=0$  if $0\in \ca$
and $\theta_0=p(0)$ if $\ca=\N^*$. 

\begin{prop}
\label{prop:sub}
  Let  $\tau$ be  a GW  tree with offspring distribution   $p$ satisfying
$  p(0)>0 $ and $p(0)+p(1)<1$.   Let  $\ca\subset  \N$  such  that
$p(\ca)>0$. For   every   $\theta\in   I$,   let
  $\tau_\theta$ be a GW tree with offspring distribution  $p_\theta$. Then the
  conditional distributions of $\tau$ given $\{\cl_\ca(\tau)=n\}$ and of
  $\tau_\theta$ given $\{\cl_\ca(\tau_\theta)=n\}$ are the same.
\end{prop}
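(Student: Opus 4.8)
The plan is to show that, restricted to finite trees, the law of $\tau_\theta$ is an explicit reweighting of the law of $\tau$ by a factor depending on the tree only through $L_\ca$; this factor then cancels upon conditioning on $\{L_\ca=n\}$.

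First I would fix $\bt\in\T_0$ and evaluate $\P(\tau_\theta=\bt)$ with the product formula \reff{eq:loi-tau}, grouping the vertices of $\bt$ according to whether their out-degree lies in $\ca$:
\[
\P(\tau_\theta=\bt)=\prod_{u\in\bt}p_\theta(k_u(\bt))
=c_\ca(\theta)^{L_\ca(\bt)}\;\theta^{\,e(\bt)}\prod_{u\in\bt}p(k_u(\bt)),
\]
where $e(\bt)=\sum_{u:\,k_u(\bt)\in\ca}k_u(\bt)+\sum_{u:\,k_u(\bt)\notin\ca}\bigl(k_u(\bt)-1\bigr)$. The key observation is that this exponent collapses: rewriting $e(\bt)=\sum_{u\in\bt}k_u(\bt)-\bigl(\Card(\bt)-L_\ca(\bt)\bigr)$ and inserting \reff{eq:sum_k}, i.e. $\sum_{u\in\bt}k_u(\bt)=\Card(\bt)-1$, yields $e(\bt)=L_\ca(\bt)-1$. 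Using \reff{eq:loi-tau} once more for $\tau$, I obtain
\[
\P(\tau_\theta=\bt)=c_\ca(\theta)^{L_\ca(\bt)}\,\theta^{\,L_\ca(\bt)-1}\,\P(\tau=\bt)\qquad\text{for every }\bt\in\T_0 .
\]
Thus the Radon--Nikodym factor between the two laws on $\T_0$ depends on $\bt$ only through $L_\ca(\bt)$.

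I would then fix $n$ with $\P(L_\ca(\tau)=n)>0$ and restrict to finite trees with $L_\ca(\bt)=n$, on which this factor equals the constant $c_\ca(\theta)^{n}\theta^{\,n-1}$. Summing the displayed identity over all $\bt\in\T_0$ with $L_\ca(\bt)=n$ gives $\P(L_\ca(\tau_\theta)=n,\ \tau_\theta\in\T_0)=c_\ca(\theta)^{n}\theta^{\,n-1}\,\P(L_\ca(\tau)=n,\ \tau\in\T_0)$; dividing the singleton identity by this shows $\P(\tau_\theta=\bt\mid L_\ca(\tau_\theta)=n)=\P(\tau=\bt\mid L_\ca(\tau)=n)$ for each such $\bt$. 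Since $\T_0$ is countable, two conditional laws that are carried by $\T_0$ and agree on every singleton coincide, which is the assertion.

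The point I expect to be the main obstacle is justifying that both conditioning events are carried by $\T_0$, since the factorization and \reff{eq:loi-tau} hold only for finite trees. This is exactly where $p(\ca)>0$ enters. On the event of non-extinction the tree has infinitely many vertices, and conditionally on the size $Z_m$ of generation $m$ the number of generation-$m$ vertices with out-degree in $\ca$ is binomial with parameters $Z_m$ and $p(\ca)$; as $Z_m\to+\infty$ there, $L_\ca=+\infty$ almost surely. Hence for finite $n$ the event $\{L_\ca(\tau)=n\}$ (and likewise $\{L_\ca(\tau_\theta)=n\}$, using $p_\theta(\ca)>0$, which holds whenever the conditioning is non-degenerate) lies in $\T_0$ up to a null set. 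When $\mu\le 1$ this is automatic because $\tau,\tau_\theta\in\T_0$ almost surely, and the argument simplifies accordingly.
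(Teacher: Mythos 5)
Your proof is correct and follows essentially the same route as the paper: the same vertex-by-vertex reweighting computation collapsing, via \reff{eq:sum_k}, to the factor $\theta^{-1}(\theta c_\ca(\theta))^{L_\ca(\bt)}$, followed by summation over $\{L_\ca=n\}$ and cancellation. Your closing remark justifying that $\{L_\ca=n\}$ is carried by $\T_0$ is a point the paper leaves implicit, and is a welcome addition since the proposition does not assume $\mu\le 1$.
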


\begin{rem}
This proposition covers Kennedy's result \cite{k:gwctp} for $\ca=\N$
and the pruning procedure of \cite{adh:pgwttvmp} for $\ca=\{0\}$.
\end{rem}

\begin{proof}
Let $\bt\in\T_0$. Then we have, using the definition of $p_\theta$ and
 \reff{eq:sum_k}:
\begin{align*}
\P(\tau_\theta=\bt) & =\prod _{v\in\bt}p_\theta(k_v(\bt))\\
& = \prod _{v\in \bt, k_v(\bt)\in\ca}
c_\ca(\theta)\theta^{k_v(\bt)}p(k_v(\bt))\prod _{v\in\bt, k_v(\bt)\not\in
  \ca}\theta^{k_v(\bt)-1}p(k_v(\bt))\\
& =
c_\ca(\theta)^{L_\ca(\bt)}\theta^{\sum_{v\in\bt}
  k_v(\bt)-L_{\ca^c}(\bt)}\P(\tau=\bt)\\  
& =c_\ca(\theta)^{L_\ca(\bt)}\theta^{\Card(\bt)-1-L_{\ca^c}(\bt)}\P(\tau=\bt)\\
& =\theta^{-1}(\theta c_\ca(\theta))^{L_\ca(\bt)}\P(\tau=\bt).
\end{align*}
We deduce that
\begin{align*}
\P(L_\ca(\tau_\theta)=n) & =\sum
_{\bt\in\T_0,\ L_\ca(\bt)=n}\P(\tau_\theta=\bt)\\
& =\theta^{-1}(\theta c_\ca(\theta))^{n}\sum
_{\bt\in\T_0,\ L_\ca(\bt)=n}\P(\tau=\bt)\\
& =\theta^{-1}(\theta c_\ca(\theta))^{n}\P(L_\ca(\tau)=n)
\end{align*}
and finally, for every $\bt\in\T_0$ such that $L_\ca(\bt)=n$, we have
\begin{multline*}
\P(\tau_\theta=\bt\bigm|
L_\ca(\tau_\theta)=n)=\frac{\P(\tau_\theta=\bt)}{\P(L_\ca(\tau_\theta)=n)}\\
=\frac{\theta^{-1}(\theta
  c_\ca(\theta))^{n}\P(\tau=\bt)}{\theta^{-1}(\theta
  c_\ca(\theta))^{n}\P(L_\ca(\tau_\theta)=n)}=\P(\tau=\bt\bigm| 
L_\ca(\tau)=n).
\end{multline*}
\end{proof}
We shall say that the offspring distribution $p$ is generic (with respect
to $\ca$)  if there  exists $\theta_c\in I$  such that  $p_{\theta_c}$ is
critical.
\begin{cor}
\label{cor:K-sub}
Let $\tau$  be a  sub-critical GW tree  with offspring  distribution $p$
satisfying Assumption  \reff{eq:cond-p}.  Let $\ca\subset  \N$ such that
$p(\ca)>0$.  For  every $\theta\in  I$, let $\tau_\theta$  be a  GW tree
with  offspring distribution  $p_\theta$.  If  $p$ is  generic,  that is
there exists $\theta_c\in I$ such that $p_{\theta_c}$ is critical, then
\[
\dist(\tau|L_\ca(\tau)=n d_\ca+1
)\underset{n\to+\infty}{\longrightarrow} \dist(\tau_{\theta_c}^*)
\]
and
\[
\dist(\tau|L_\ca(\tau)\geq n 
)\underset{n\to+\infty}{\longrightarrow} \dist(\tau_{\theta_c}^*).
\]
\end{cor}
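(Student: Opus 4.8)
The plan is to reduce both statements to the critical case of Theorem~\ref{theo:LA} by transporting the conditioning through the tilting of Proposition~\ref{prop:sub}. Since $p$ is generic, fix $\theta_c\in I$ with $p_{\theta_c}$ critical. First I would check that $\tau_{\theta_c}$ is an admissible input for Theorem~\ref{theo:LA}: the definition of $p_\theta$ gives $p_{\theta_c}(0)>0$ (it is a positive multiple of $p(0)$) and $p_{\theta_c}(k)>0$ for some $k\ge 2$ (inherited from $p$ via Assumption~\reff{eq:cond-p}), so Assumption~\reff{eq:cond-p} holds for $p_{\theta_c}$, and $p_{\theta_c}(\ca)>0$ since tilting preserves the support inside $\ca$. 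Moreover, the computation in the proof of Proposition~\ref{prop:sub} yields $\P(L_\ca(\tau_{\theta_c})=m)=\theta_c^{-1}(\theta_c c_\ca(\theta_c))^m\,\P(L_\ca(\tau)=m)$; as $\theta_c c_\ca(\theta_c)>0$, the variables $L_\ca(\tau)$ and $L_\ca(\tau_{\theta_c})$ share the same support, hence the same span $d_\ca$.

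For the conditioning $\{L_\ca(\tau)=nd_\ca+1\}$ the argument is then immediate: by Proposition~\ref{prop:sub} we have $\dist(\tau\mid L_\ca(\tau)=nd_\ca+1)=\dist(\tau_{\theta_c}\mid L_\ca(\tau_{\theta_c})=nd_\ca+1)$ for every $n$, and the right-hand side converges to $\dist(\tau_{\theta_c}^*)$ by the first assertion \reff{eq:LA_2} of Theorem~\ref{theo:LA} applied to the critical tree $\tau_{\theta_c}$.

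The conditioning $\{L_\ca(\tau)\ge n\}$ is more delicate, because this event mixes the values $m\ge n$ with weights depending on the law of $L_\ca$, so the two conditional laws of $\tau$ and $\tau_{\theta_c}$ no longer agree. Here I would argue by averaging against the convergence-determining class of Lemma~\ref{lem:cv-determing}. Fix $\bt\in\T_0$ and $x\in\cl_0(\bt)$, and set $a_m=\P(\tau\in\T(\bt,x)\mid L_\ca(\tau)=m)$ for $m$ in the common support of $L_\ca$, which lies in $d_\ca\N+1$; by the first part, $a_m\to a_\infty:=\P(\tau_{\theta_c}^*\in\T(\bt,x))$ as $m\to+\infty$ along this support. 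Decomposing on the value of $L_\ca(\tau)$,
\begin{equation*}
\P\bigl(\tau\in\T(\bt,x)\bigm| L_\ca(\tau)\ge n\bigr)=\sum_{m\ge n}\frac{\P(L_\ca(\tau)=m)}{\P(L_\ca(\tau)\ge n)}\,a_m ,
\end{equation*}
which is a convex combination of the numbers $(a_m)_{m\ge n}$, hence at distance at most $\sup_{m\ge n}|a_m-a_\infty|\to 0$ from $a_\infty$. The same averaging applied to $a_m=\P(\tau=\bt\mid L_\ca(\tau)=m)$, which tends to $0$, gives $\P(\tau=\bt\mid L_\ca(\tau)\ge n)\to 0=\P(\tau_{\theta_c}^*=\bt)$. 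By Lemma~\ref{lem:cv-determing} this yields the second convergence \reff{eq:LA2_2}.

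I expect the genuine difficulties to be purely bookkeeping rather than analytic: one must verify that $p_{\theta_c}$ satisfies the hypotheses of Theorem~\ref{theo:LA}, and that the span $d_\ca$ is common to $\tau$ and $\tau_{\theta_c}$, so that the support of $L_\ca(\tau)$ is contained in $d_\ca\N+1$ and the mixture in the last step only involves values $m$ along which $a_m\to a_\infty$. Once these are settled, the averaging step is elementary; notably, it needs \emph{no} tail estimate on $\P(L_\ca(\tau)\ge n)$, in contrast with the critical case where the regularity condition \reff{eq:CondK} had to be established through Dwass' formula and the strong ratio theorem.
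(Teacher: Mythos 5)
Your proof is correct and follows exactly the route the paper intends (the corollary is stated without an explicit proof, but the paper's "immediate deduction" is precisely your combination of Proposition~\ref{prop:sub} with Theorem~\ref{theo:LA}, and the remark following the corollary confirms that the $\{L_\ca(\tau)\ge n\}$ case is meant to be handled as a mixture of the $\{L_\ca(\tau)=m\}$ cases, as you do). Your write-up merely makes explicit the bookkeeping the paper leaves implicit: that $p_{\theta_c}$ satisfies Assumption~\reff{eq:cond-p}, that the spans agree because the tilting preserves the support of $L_\ca$, and that the convex-combination step needs no tail estimate.
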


\begin{rem}
The first convergence of the corollary remains valid for a super-critical
offspring distribution but not the second one as the conditional
distribution cannot be written as a mixture of the first one as the
tree may be infinite.
\end{rem}

\begin{rem}
   \label{rem:condensation}
If the critical value $\theta_c$ of Corollary \ref{cor:K-sub} does not
exist, then we observe a condensation phenomenon: the limiting tree
does not have an infinite spine, but exhibits a unique vertex with an
infinite number of children, see \cite{j:sgtcgwrac} for $\ca=\N$ and the
forthcoming  paper \cite{ad:ncgwtcc} for the general case. 
\end{rem}

\section{Conditioning by the size of a high generation}
\label{sec:geom}

We end this paper with a conditioning which does not enter into the
framework of Theorem \ref{theo:KGen}. However its proof can be easily
adapted. For a tree $\bt$, we denote by
$$\cg_n(\bt)=\Card(\{u\in\bt,\ |u|=n\})$$
the size of the $n$-th generation of $\bt$. Then we have

\begin{prop}\label{prop:gen}
Let  $\tau$ be  a critical GW  tree with  offspring distribution   $p$ satisfying
  Assumption   \reff{eq:cond-p}.  Let $(\alpha_n, n \in \N) $ be a
  sequence of positive
integers. If for all $j\in \N^*$
\begin{equation}\label{eq:condZn}
\lim_{n\to+\infty}\frac{\P(\cg_{n-j}(\tau)=\alpha_n)}{\P(\cg_n(\tau)=\alpha_n)}=1,  
\end{equation}
then we have
\begin{equation}\label{eq:Zn}
\dist(\tau|\cg_n(\tau)=\alpha_n)\underset{n\to+\infty}{\longrightarrow}\dist(\tau^*). 
\end{equation}
\end{prop}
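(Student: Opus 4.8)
The plan is to adapt the proof of Theorem \ref{theo:KGen} to the setting where the conditioning functional $\cg_n$ depends on $n$ and does not satisfy the additivity property \reff{eq:A=A+B}. First I would recall that, since $\tau$ is critical, we have $\tau\in\T_0$ a.s., so Lemma \ref{lem:cv-determing} applies and it suffices to establish the two limits in \reff{eq:cv-determing}: that for every $\bt\in\T_0$ and every $x\in\cl_0(\bt)$ we have $\lim_n\P(\tau\in\T(\bt,x)\mid \cg_n(\tau)=\alpha_n)=\P(\tau^*\in\T(\bt,x))$, and that $\lim_n\P(\tau=\bt\mid \cg_n(\tau)=\alpha_n)=0=\P(\tau^*=\bt)$. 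The second of these is immediate: for a fixed finite tree $\bt$ and $n>H(\bt)$ we have $\cg_n(\bt)=0$, so $\{\tau=\bt\}$ is incompatible with $\{\cg_n(\tau)=\alpha_n\}$ whenever $\alpha_n>0$, giving probability zero.

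The core of the argument is the analogue of \reff{eq:tt*}. Fix $\bt\in\T_0$ and $x\in\cl_0(\bt)$, with $|x|=j$. The key structural observation is that the number of individuals of $\bt\circledast(\tilde\bt,x)$ at generation $n$ equals the number of individuals of $\bt$ at generation $n$ (which is $0$ once $n>H(\bt)$) plus the number of individuals of $\tilde\bt$ at generation $n-j$. Hence, for $n>H(\bt)$, the event $\{\cg_n(\bt\circledast(\tilde\bt,x))=\alpha_n\}$ coincides with $\{\cg_{n-j}(\tilde\bt)=\alpha_n\}$. Using the branching-property decomposition exactly as in the proof of Theorem \ref{theo:KGen}, namely $\P(\tau=\bt\circledast(\tilde\bt,x))=\inv{p(0)}\P(\tau=\bt)\P(\tau=\tilde\bt)$, and summing over $\tilde\bt\in\T_0$, I would obtain
\begin{equation*}
\P(\tau\in\T(\bt,x),\ \cg_n(\tau)=\alpha_n)
=\inv{p(0)}\P(\tau=\bt)\,\P(\cg_{n-j}(\tau)=\alpha_n).
\end{equation*}
Invoking \reff{eq:t*-Tp} with $\mu=1$, the prefactor $\inv{p(0)}\P(\tau=\bt)$ is exactly $\P(\tau^*\in\T(\bt,x))$ (note $\mu^{|x|}=1$ in the critical case), so after dividing by $\P(\cg_n(\tau)=\alpha_n)$ I get
\begin{equation*}
\P(\tau\in\T(\bt,x)\mid \cg_n(\tau)=\alpha_n)
=\P(\tau^*\in\T(\bt,x))\,\frac{\P(\cg_{n-j}(\tau)=\alpha_n)}{\P(\cg_n(\tau)=\alpha_n)}.
\end{equation*}

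The assumption \reff{eq:condZn} states precisely that the ratio on the right tends to $1$ for each fixed $j\in\N^*$, so letting $n\to+\infty$ yields the desired limit $\P(\tau^*\in\T(\bt,x))$. Combining this with the vanishing of $\P(\tau=\bt\mid\cg_n(\tau)=\alpha_n)$, Lemma \ref{lem:cv-determing} delivers the convergence \reff{eq:Zn}. The main obstacle, or rather the one point requiring care, is the bookkeeping at generation $n-j$ versus $n$: I must check that the grafting of $\tilde\bt$ at the leaf $x$ of height $j$ shifts the generation index cleanly by $j$ and that the contribution of $\bt$ itself to generation $n$ vanishes for $n$ large, which justifies replacing $\cg_n(\bt\circledast(\tilde\bt,x))$ by $\cg_{n-j}(\tilde\bt)$; this is the place where additivity in the sense of \reff{eq:A=A+B} fails but is replaced by the generation-shift identity, and it is exactly what forces the appearance of the ratio $\P(\cg_{n-j}(\tau)=\alpha_n)/\P(\cg_n(\tau)=\alpha_n)$ rather than the single-step ratio of Theorem \ref{theo:KGen}.
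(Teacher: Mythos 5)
Your argument is correct and follows essentially the same route as the paper: the generation-shift identity $\cg_n(\bt\circledast(\tilde\bt,x))=\cg_n(\bt)+\cg_{n-|x|}(\tilde\bt)$ replacing the additivity \reff{eq:A=A+B}, the branching-property factorization $\P(\tau=\bt\circledast(\tilde\bt,x))=\P(\tau=\bt)\P(\tau=\tilde\bt)/p(0)$, the identification of the prefactor via \reff{eq:t*-Tp} with $\mu=1$, and the conclusion through Lemma \ref{lem:cv-determing}. Your explicit treatment of the point-mass limit $\P(\tau=\bt\mid\cg_n(\tau)=\alpha_n)\to 0$ and the care with $n>H(\bt)$ (so that $\cg_n(\bt)=0$) are fine and, if anything, slightly more careful than the paper's write-up.
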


\begin{proof}
For every tree $\bt\in\T_0$, every $x\in\cl_0(\bt)$ and every tree
$\tilde\bt\in\T$, we have
$$\cg_n(\bt\circledast(\tilde\bt,x))=\cg_n(\bt)+\cg_{n-|x|}(\tilde\bt)$$
which generalizes Assumption \reff{eq:A=A+B}.

The same computations as in the proof of Theorem \ref{theo:KGen} give
for $\bt\in\T_0$, $x\in\cl_0(\bt)$ and $n\ge H(\bt)$:
\begin{align*}
\P(\tau\in\T(\bt,x),\cg_n(\tau)=\alpha_n) &
=\frac{1}{p(0)}\P(\tau=\bt)\P(\cg_{n-|x|}(\tau)=\alpha_n-\cg_n(\bt))\\
& =\P(\tau^*\in\T(\bt,x))\P(\cg_{n-|x|}(\tau)=\alpha_n).
\end{align*}

Therefore, we obtain by Assumption \reff{eq:condZn}:
\begin{align*}
\lim_{n\to+\infty}\P(\tau\in
\T(\bt,x)|\cg_n(\tau)=\alpha_n) &
=\lim_{n\to+\infty}\P(\tau^*\in\T(\bt,x))\frac{\P(\cg_{n-|x|}(\tau)=\alpha_n)}{\P(\cg_n(\tau)=\alpha_n)}\\
& =\P(\tau^*\in\T(\bt,x)).
\end{align*}
The result follows from Lemma \ref{lem:cv-determing}. 
\end{proof}

\begin{cor}
\label{cor:geom}
Let  $\tau$ be  a critical GW  tree with offspring distribution   $p$ given
by a mixture of a 
geometric distribution with parameter $q\in(0,1)$ and a Dirac mass
  at 0, i.e. $p(0)=1-q$ and $p(k)=q^2(1-q)^{k-1}$ for $k\ge 1$.
Let $(\alpha_n,n\in \N)$ be a
  sequence of positive
integers such that $\lim_{n\rightarrow+\infty } n^{-2} \alpha_n=0$. Then
we have:
\[
\dist(\tau|\cg_n(\tau)=\alpha_n
)\underset{n\to+\infty}{\longrightarrow}\\dist(\tau^*).
\]
\end{cor}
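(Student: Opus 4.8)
The plan is to apply Proposition~\ref{prop:gen}: it suffices to verify the ratio condition \reff{eq:condZn} for this particular offspring distribution, that is, to show that for every fixed $j\in\N^*$,
\[
\lim_{n\to+\infty}\frac{\P(\cg_{n-j}(\tau)=\alpha_n)}{\P(\cg_n(\tau)=\alpha_n)}=1.
\]
The advantage of the geometric case is that the offspring generating function $\varphi$ is linear fractional, so every quantity can be made explicit. First I would compute $\varphi$ in closed form; a short calculation gives
\[
1-\varphi(s)=\frac{q(1-s)}{1-(1-q)s},
\]
from which $\varphi(1)=1$ (criticality) is immediate and, more importantly, the M\"obius change of variable $s\mapsto 1/(1-s)$ conjugates $\varphi$ to a translation:
\[
\frac{1}{1-\varphi(s)}=\frac{1}{1-s}+\frac{1-q}{q}.
\]

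Writing $c=(1-q)/q$ and iterating this identity $n$ times (recall $\varphi_n$ is the generating function of $\cg_n(\tau)$), I obtain
\[
\frac{1}{1-\varphi_n(s)}=\frac{1}{1-s}+nc,
\qquad\text{hence}\qquad
\varphi_n(s)=1-\frac{1-s}{1+nc(1-s)}.
\]
Extracting coefficients is then elementary: expanding $\varphi_n$ as a power series in $s$ yields, for every $k\in\N^*$,
\[
\P(\cg_n(\tau)=k)=\frac{1}{(1+nc)^{2}}\left(\frac{nc}{1+nc}\right)^{k-1}.
\]
In particular $\P(\cg_n(\tau)=\alpha_n)>0$ for all $n$, so the conditioning is well defined.

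With these explicit formulas the ratio in \reff{eq:condZn} factorizes, for $k=\alpha_n$, as
\[
\frac{\P(\cg_{n-j}(\tau)=\alpha_n)}{\P(\cg_n(\tau)=\alpha_n)}
=\left(\frac{1+nc}{1+(n-j)c}\right)^{2}R_n^{\,\alpha_n-1},
\qquad
R_n=\frac{(n-j)(1+nc)}{n(1+(n-j)c)}.
\]
The prefactor tends to $1$ for fixed $j$. For the second factor, a direct computation gives $R_n=1-\dfrac{j}{n^{2}c+n(1-jc)}$, so that $\log R_n\sim -j/(cn^{2})$ as $n\to+\infty$. The crux is then to control $R_n^{\,\alpha_n-1}$: since $(\alpha_n-1)\log R_n\sim-\dfrac{j}{c}\,\dfrac{\alpha_n}{n^{2}}$, the hypothesis $n^{-2}\alpha_n\to0$ forces this to tend to $0$, whence $R_n^{\,\alpha_n-1}\to1$ and the whole ratio tends to $1$. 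This establishes \reff{eq:condZn}, and Proposition~\ref{prop:gen} then gives \reff{eq:Zn}. I expect this last estimate to be the only real obstacle: it is precisely here that the assumption $\alpha_n=o(n^{2})$ is used, as it is exactly what neutralizes an exponent growing like $\alpha_n$ against the $n^{-2}$ decay of $-\log R_n$.
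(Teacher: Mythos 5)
Your proof is correct and follows essentially the same route as the paper: both exploit the linear-fractional form of $\varphi$ to get the explicit formula $\P(\cg_n(\tau)=k)=(nc)^{k-1}/(nc+1)^{k+1}$, then verify the ratio condition \reff{eq:condZn} by showing the exponential factor tends to $1$ because $\alpha_n/n^2\to0$, and conclude via Proposition~\ref{prop:gen}. The only difference is cosmetic (you derive $\varphi_n$ via the conjugation $\tfrac{1}{1-\varphi_n(s)}=\tfrac{1}{1-s}+nc$ and spell out the logarithmic estimate, which the paper leaves implicit).
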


\begin{proof}
In that particular case, the generating function $\varphi_n$ of
$\cg_n(\tau)$ is explicitly known and we have for every $s\in[0,1]$
\[
\varphi_n(s)=\frac{nc-(nc-1)s}{(nc+1)-ncs}
\]
with $c=(1-q)/q$.
Expanding $\varphi_n$ gives for every $k\ge 1$:
\[
\P(\cg_n(\tau)=k)=\frac{(nc)^{k-1}}{(nc+1)^{k+1}},
\]
and therefore for $j\geq 1$
\[
\lim_{n\to+\infty}\frac{\P(\cg_{n-j}(\tau)=\alpha_n)}{\P(\cg_n(\tau)=\alpha_n)}
=\lim_{n\to+\infty}\frac{n(nc+1)}{(n-j)((n-j)c+1)}
\left(\frac{1+\frac{1}{nc}}{1+\frac{1}{(n-j)c}}\right)^{\alpha_n}=1.
\]
Then use Proposition \ref{prop:gen} to conclude. 
\end{proof}

\begin{rem}
As for Theorem \ref{theo:KGen}, we can obtain the converse of
Proposition \ref{prop:gen}. We deduce that, in the geometric case of
Corollary \ref{cor:geom}, the
GW tree $\tau$ conditioned on $\{\cg_n(\tau)= k\lfloor n^a
\rfloor\}$, with $k\in \N^*$, 
 converges in distribution to Kesten's tree if and only if $a\in
 [0,2)$. 
\end{rem}

Let $X$ be a random variable with distribution $p$, $d$ the span of $X$ and set
$B=\E[X(X-1)]$. We recall the theorem of \cite{nv:lltcwp}. Assume
that $p$ is critical, that Assumption
\reff{eq:cond-p} holds and that $B$ is finite. 
If 
\begin{equation}
   \label{eq:alpha2}
\lim_{n\to+\infty}\alpha_n=+\infty
\quad\text{and}\quad
\limsup_{n\to+\infty}\frac{\alpha_n}{n}<+\infty ,
\end{equation}
then we have:
\[
\lim_{n\rightarrow+\infty } B^2n^2 \left(1+
  \frac{2d}{Bn}\right)^{\alpha_n} \P(\cg_n(\tau)=d\alpha_n)=4d.
\]
We  also recall Theorem 1 of \cite{nv:ltpldgwp}. Let $\rho$ be  the
convergence radius of the generating function of $p$. Assume that $p$ is
critical, that Assumption
\reff{eq:cond-p} holds and that
$\rho>1$. Assume also that
\begin{equation}
   \label{eq:alpha1}
\lim_{n\to+\infty}\frac{\alpha_n}{n}=+\infty
\quad\text{and}\quad
\lim_{n\to+\infty}\frac{\alpha_n}{n^2}=0.
\end{equation}
Then there exists  $c\in \R$ such that:
\[
\lim_{n\rightarrow+\infty } B^2n^2 \expp{ \frac{ 2d\alpha_n}{B n} +
  c\frac{\alpha_n}{n^2}\log(\alpha_n/n)} 
 \P(\cg_n(\tau)=d\alpha_n)=4d.
\]

Then using Proposition \ref{prop:gen}, we give an immediate extension of
Corollary \ref{cor:geom} to a large class of offspring distributions.

\begin{prop}
  Let  $p$ be  a critical  offspring distribution  satisfying Assumption
  \reff{eq:cond-p}  and such  that $B$  is finite.   Assume  either that
  $(\alpha_n,n\in\N)$  is  a sequence  of  positive integers  satisfying
  \reff{eq:alpha2}  or  that   $\rho>1$  and  $(\alpha_n,n\in\N)$  is  a
  sequence of positive  integers satisfying \reff{eq:alpha1}. Let $\tau$
  be a critical GW tree with offspring distribution $p$. Then we have
\[
\dist(\tau|\cg_n(\tau)=d\alpha_n
)\underset{n\to+\infty}{\longrightarrow}\dist(\tau^*).
\]
\end{prop}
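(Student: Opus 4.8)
The plan is to apply Proposition \ref{prop:gen} with the conditioning sequence $(d\alpha_n,\ n\in\N)$ in place of $(\alpha_n)$, so that it suffices to verify the ratio condition \reff{eq:condZn} for this sequence, namely that for every fixed $j\in\N^*$,
\[
\lim_{n\to+\infty}\frac{\P(\cg_{n-j}(\tau)=d\alpha_n)}{\P(\cg_n(\tau)=d\alpha_n)}=1.
\]
Both numerator and denominator fall under the recalled theorems of Nagaev and Vakhtel: the denominator directly, and for the numerator I would set $m=n-j$ and read the target $d\alpha_n$ as $d\alpha_{m+j}$. The first thing to check is that the shifted sequence $(\alpha_{m+j})_m$ still satisfies the relevant hypothesis; since $(m+j)/m\to 1$ for fixed $j$, condition \reff{eq:alpha2} (resp. \reff{eq:alpha1}) for $(\alpha_n)$ transfers to $(\alpha_{m+j})_m$. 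This allows me to replace both probabilities by their asymptotic equivalents and reduce the problem to an explicit ratio.

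In the first case (hypothesis \reff{eq:alpha2}), the recalled theorem of \cite{nv:lltcwp} gives
\[
\P(\cg_n(\tau)=d\alpha_n)\sim \frac{4d}{B^2n^2}\left(1+\frac{2d}{Bn}\right)^{-\alpha_n},
\]
together with the analogous equivalent for $\cg_{n-j}$. The prefactor ratio $n^2/(n-j)^2$ tends to $1$, so there remains the power factor, for which I take logarithms and must show $\alpha_n\bigl[\log(1+\tfrac{2d}{Bn})-\log(1+\tfrac{2d}{B(n-j)})\bigr]\to 0$. A Taylor expansion gives a leading contribution of order $\alpha_n/n^2$, which tends to $0$ because $\limsup_n \alpha_n/n<+\infty$ forces $\alpha_n/n^2\to 0$; the remaining terms are of smaller order. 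Hence the ratio tends to $1$.

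In the second case (hypothesis \reff{eq:alpha1}, with $\rho>1$), the recalled Theorem 1 of \cite{nv:ltpldgwp} yields
\[
\P(\cg_n(\tau)=d\alpha_n)\sim \frac{4d}{B^2n^2}\expp{-\frac{2d\alpha_n}{Bn}-c\,\frac{\alpha_n}{n^2}\log(\alpha_n/n)},
\]
with the corresponding equivalent for $\cg_{n-j}$ obtained by replacing $n$ by $n-j$ in the generation index only. Again the prefactor ratio tends to $1$, so it remains to show that the difference $E_n$ of the two exponents tends to $0$. The linear part contributes $-\tfrac{2dj}{B}\,\tfrac{\alpha_n}{n(n-j)}$, which tends to $0$ since $\alpha_n/n^2\to 0$. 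The delicate point, which I expect to be the main obstacle, is the difference of the logarithmic correction terms: note that these terms need not vanish individually, so one cannot treat them separately. I would estimate the difference by applying the mean value theorem to $m\mapsto \alpha_n m^{-2}\log(\alpha_n/m)$, obtaining a quantity of order $\alpha_n\log(\alpha_n/n)/n^3$. Writing this as $(\alpha_n/n^2)\cdot(\log(\alpha_n/n)/n)$ and using that $\alpha_n/n^2\to 0$ is bounded, while $\alpha_n<n^2$ eventually gives $\log(\alpha_n/n)<\log n$ and hence $\log(\alpha_n/n)/n\to 0$, this difference tends to $0$. Therefore $E_n\to 0$, the ratio tends to $1$, and Proposition \ref{prop:gen} yields the stated convergence in both cases.
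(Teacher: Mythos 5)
Your proof is correct and follows exactly the route the paper intends: the paper states the result as an immediate consequence of Proposition \ref{prop:gen} together with the recalled Nagaev--Vakhtel asymptotics, leaving the verification of \reff{eq:condZn} implicit, and you supply that verification correctly (including the shift of index to apply the asymptotics at generation $n-j$ and the mean-value estimate for the logarithmic correction, which indeed cannot be handled termwise).
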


\bigskip
{\bf Acknowledgments.} The authors want to thank anonymous referees
for their useful remarks that improved considerably the presentation
of that paper, and in particular for pointing out the references
\cite{ck:rncpccgwta,nv:ltpldgwp,nv:lltcwp}.

\bibliographystyle{abbrv}
\bibliography{biblio}

\begin{thebibliography}{10}

\bibitem{ad:ncgwtcc}
R.~ABRAHAM and J.~DELMAS.
\newblock Local limits of conditioned {G}alton-{W}atson trees {II}: the
  condensation case.
\newblock Work in progress.

\bibitem{adh:pgwttvmp}
R.~ABRAHAM, J.~DELMAS, and H.~HE.
\newblock Pruning {G}alton-{W}atson trees and tree-valued {M}arkov processes.
\newblock {\em Ann. de l'Inst. Henri Poincar\'e}, 48:688--705, 2012.

\bibitem{afk:asrvlsb}
S.~ASMUSSEN, S.~FOSS, and D.~KORSHUNOV.
\newblock Asymptotics for sums of random variables with local subexponential
  behaviour.
\newblock {\em J. of th. Probab.}, 16:489--518, 2003.

\bibitem{an:bp}
K.~B. ATHREYA and P.~E. NEY.
\newblock {\em Branching processes}.
\newblock Dover Publications Inc., Mineola, NY, 2004.

\bibitem{b:cpm}
P.~BILLINGSLEY.
\newblock {\em Convergence of probability measures}.
\newblock Wiley Series in Probability and Statistics: Probability and
  Statistics. John Wiley \& Sons Inc., New York, second edition, 1999.
\newblock A Wiley-Interscience Publication.

\bibitem{ck:rncpccgwta}
N.~CURIEN and I.~KORTCHEMSKI.
\newblock Random non-crossing plane configurations: a conditioned
  {G}alton-{W}atson tree approach.
\newblock {\em Random Struct. and Alg.}, To appear, 2013.

\bibitem{d:ltcpcgwt}
T.~DUQUESNE.
\newblock A limit theorem for the contour process of conditioned
  {G}alton-{W}atson trees.
\newblock {\em Ann. Probab.}, 31(2):996--1027, 2003.

\bibitem{d:tpbprrw}
M.~DWASS.
\newblock The total progeny in a branching process and a related random walk.
\newblock {\em J. Appl. Probability}, 6:682--686, 1969.

\bibitem{gk:slgwtiv}
J.~GEIGER and L.~KAUFMANN.
\newblock The shape of large {G}alton-{W}atson trees with possibly infinite
  variance.
\newblock {\em Random Structures Algorithms}, 25(3):311--335, 2004.

\bibitem{h:tgsbp}
J.~HAWKES.
\newblock Trees generated by a simple branching process.
\newblock {\em J. London Math. Soc.}, 24:373--384, 1981.

\bibitem{il:issrv}
I.~IBRAGINOV and Y.~V. LINNIK.
\newblock {\em Independent and stationary sequences of random variables}.
\newblock Wolters-Noordhoff Publishing, Groningen, 1971.
\newblock With a supplementary chapter by I. A. Ibragimov and V. V. Petrov,
  Translation from the Russian edited by J. F. C. Kingman.

\bibitem{j:sgtcgwrac}
S.~JANSON.
\newblock Simply generated trees, conditioned {G}alton-{W}atson trees, random
  allocations and condensation.
\newblock {\em Probab. Surv.}, 9:103--252, 2012.

\bibitem{jw:edkngwp}
A.~JOFFE and W.~WAUGH.
\newblock Exact distributions of kin numbers in a {G}alton-{W}atson process.
\newblock {\em J. Appl. Probab.}, 19(4):767--775, 1982.

\bibitem{js:cnt}
T.~JONSSON and S.~STEFANSSON.
\newblock Condensation in nongeneric trees.
\newblock {\em J. Stat. Phys.}, 142:277--313, 2011.

\bibitem{k:gwctp}
D.~KENNEDY.
\newblock The {G}alton-{W}atson process conditioned on the total progeny.
\newblock {\em J. Appl. Probability}, 12(4):800--806, 1975.

\bibitem{k:sbrwrc}
H.~KESTEN.
\newblock Subdiffusive behavior of random walk on a random cluster.
\newblock {\em Ann. Inst. H. Poincar\'e Probab. Statist.}, 22(4):425--487,
  1986.

\bibitem{k:ipgwcnl}
I.~KORTCHEMSKI.
\newblock Invariance principles for {G}alton--{W}atson trees conditioned on the
  number of leaves.
\newblock {\em Stochastic Process. Appl.}, 122(9):3126--3172, 2012.

\bibitem{k:ltcnggwt}
I.~KORTCHEMSKI.
\newblock Limit theorems for conditioned non-generic {G}alton-{W}atson trees.
\newblock arXiv:1205.3145, 2012.

\bibitem{lpp:cpLlogLcmbbp}
R.~LYONS, R.~PEMANTLE, and Y.~PERES.
\newblock Conceptual proofs of {L} log{L} criteria for mean behavior of
  branching processes.
\newblock {\em Ann. of Probab.}, 23:1125--1138, 1995.

\bibitem{m:nvgdgwt}
N.~MIMAMI.
\newblock On the number of vertices with a given degree in a {G}alton-{W}atson
  tree.
\newblock {\em Adv. in Appl. Probab.}, 37(1):229--264, 2005.

\bibitem{nv:ltpldgwp}
S.~NAGAEV and V.~VAKHTEL.
\newblock Limit theorems for probabilities of large deviations of a
  {G}alton-{W}atson process.
\newblock {\em Discrete Math. Appl.}, 13:1--26, 2003.

\bibitem{nv:lltcwp}
S.~NAGAEV and V.~VAKHTEL.
\newblock On the local limit theorem for a critical {G}alton-{W}atson process.
\newblock {\em Theory Proba. Appl.}, 50:403--419, 2006.

\bibitem{n:apghw}
J.~NEVEU.
\newblock Arbres et processus de {G}alton-{W}atson.
\newblock {\em Ann. de l'Inst. Henri Poincar\'e}, 22:199--207, 1986.

\bibitem{r:slmbtgwtcnvodgs}
D.~RIZZOLO.
\newblock Scaling limits of {M}arkov branching trees and {G}alton-{W}atson
  trees conditioned on the number of vertices with out-degree in a given set.
\newblock arXiv:1105.2528, 2013.

\bibitem{s:prw}
F.~SPITZER.
\newblock {\em Principles of random walk}.
\newblock Springer-Verlag, New York, second edition, 1976.
\newblock Graduate Texts in Mathematics, Vol. 34.

\end{thebibliography}

\end{document}